\providecommand{\U}[1]{\protect\rule{.1in}{.1in}}
\newtheorem{theorem}{Theorem}[section]
\newtheorem{lemma}[theorem]{Lemma}
\newtheorem{proposition}[theorem]{Proposition}
\newtheorem{remark}[theorem]{Remark}
\numberwithin{equation}{section}
\newenvironment{proof}[1][Proof]{\noindent\textbf{#1.} }{\ \rule{0.5em}{0.5em}}
\let\pdfoutput=\undefined\fi
\begin{document}

\title{On a global gradient estimate in $p$-Laplacian problems}
\author{Grey Ercole\\{\small Universidade Federal de Minas Gerais,}\\{\small Belo Horizonte, MG, 30.123-970, Brazil}\\{\small grey@mat.ufmg.br} }
\maketitle

\begin{abstract}
\noindent We make explicit the $p$-dependence of $C$ in the gradient estimate
$\left\Vert \nabla u\right\Vert _{\infty}^{p-1}\leq C\left\Vert f\right\Vert
_{N,1}$ by Cianchi and Maz'ya (2011). In such inequality, the constant $C$ is
uniform with respect to $f\in L^{N,1}(\Omega),$ and $u$ is the weak solution
to the Poisson equation $-\operatorname{div}(\left\vert \nabla u\right\vert
^{p-2}\nabla u)=f$ \ in a bounded domain $\Omega\subset\mathbb{R}^{N},$
$N\geq3,$ coupled with either Neumann or Dirichlet homogeneous boundary
conditions. The case $N=2$ with $f\in L^{q}(\Omega),$ for some $q>2,$ is also
considered . $\bigskip$

{\small \noindent\textbf{2020 MSC:} 35B45, 35J25, 35J92}.

\noindent{\small \textbf{K}\textbf{eywords:} Dirichlet problem, gradient
estimate, Neumann problem, p-Laplacian}.

\end{abstract}

\section{Introduction}

In \cite{CM}, Cianchi and Maz'ya considered the boundary value problems
\begin{equation}
\left\{
\begin{array}
[c]{lll}%
-\operatorname{div}\left(  a(\left\vert \nabla u\right\vert )\nabla u\right)
=f(x) & \text{in} & \Omega\\
\dfrac{\partial u}{\partial\nu}=0 & \text{on} & \partial\Omega,
\end{array}
\right.  \label{na}%
\end{equation}
and%
\begin{equation}
\left\{
\begin{array}
[c]{lll}%
-\operatorname{div}\left(  a(\left\vert \nabla u\right\vert )\nabla u\right)
=f(x) & \text{in} & \Omega\\
u=0 & \text{on} & \partial\Omega,
\end{array}
\right.  \label{da}%
\end{equation}
where $\Omega$ is a bounded domain of $\mathbb{R}^{N},$ $N\geq3,$ $f\in
L^{N,1}(\Omega),$ and $a:(0,\infty)\rightarrow(0,\infty)$ is a function of
class $C^{1}$ satisfying%
\begin{equation}
-1<i_{a}:=\inf_{t>0}\frac{ta^{\prime}(t)}{a(t)}\leq s_{a}:=\sup_{t>0}%
\frac{ta^{\prime}(t)}{a(t)}<\infty. \label{iasa}%
\end{equation}

Assuming that $\partial\Omega\in W^{2}L^{N-1,1}$ and $\int_{\Omega
}f(x)\mathrm{d}x=0,$ Cianchi and Maz'ya proved the estimate%
\begin{equation}
\left\Vert \nabla u\right\Vert _{\infty}\leq Cb^{-1}(\left\Vert f\right\Vert
_{N,1}) \label{cm}%
\end{equation}
for a weak solution $u$ to the Neumann problem (\ref{na}), where
$b(t):=a(t)t,$ $t>0,$ and $C=C(\Omega,i_{a},s_{a})$ is an abstract constant
that depends on $\Omega,$ $i_{a}$ and $s_{a}.$ This result is stated in
Theorem 1.1 of \cite{CM}.

As remarked in \cite{CM}, the assumption $\partial\Omega\in W^{2}L^{N-1,1}$
means that the boundary of $\Omega$ is locally the subgraph of a function of
$N-1$ variables whose derivatives up to second order are in the Lorentz space
$L^{N-1,1}.$ Moreover, this is the weakest possible integrability assumption
on the second-order derivatives under which $\partial\Omega\in C^{1,0}$ (see
\cite{CP}).

By a small change in the proof of Theorem 1.1, Cianchi and Maz'ya obtained the
same estimate (\ref{cm}) for the weak solution of the Dirichlet problem
(\ref{da}), with $\partial\Omega\in W^{2}L^{N-1,1}$ and $f\in L^{N,1}(\Omega)$
(see Theorem 1.3 of \cite{CM}). Further, they showed that (\ref{cm}) also
holds in both problems if the hypothesis $\partial\Omega\in W^{2}L^{N-1,1}$ is
replaced with the hypothesis that $\Omega$ is convex (see Theorems 1.2 and 1.4
of \cite{CM}).

Under additional regularity conditions on $a,$ $f$ and $\partial\Omega,$ the
crucial arguments for achieving (\ref{cm}) are developed by Cianchi and Maz'ya
in a first step of Section 4 of \cite{CM}, supported by estimates established
in Section 2 of that paper. The extra regularity assumptions are then removed
in three more steps of Section 4 by approximation arguments.

When restricted to the $p$-Laplacian operator, that is, $a(t)=t^{p-2},$ $p>1,$
the estimate (\ref{cm}) is equivalent to
\begin{equation}
\left\Vert \nabla u\right\Vert _{\infty}^{p-1}\leq C\left\Vert f\right\Vert
_{N,1} \label{CM1}%
\end{equation}
with the constant $C$ depending on $N,$ $\Omega$ and $p.$ In this case,
$i_{a}=s_{a}=p-2.$

Our main goal in the present paper is to exhibit an explicit expression (not
necessarily optimal) for dependence of the constant $C$ with respect to $p,$
for the $p$-Laplacian. \ The well-known regularization $a_{\epsilon
}(t):=(t^{2}+\epsilon)^{(p-2)/2}$ of $a(t)=t^{p-2}$ provides estimates that
allow us to track the proofs given by Cianchi and Maz'ya in \cite{CM} and then
achieve our main results, stated as follows.

\begin{theorem}
\label{main}Let $\Omega$ be a bounded domain of $\mathbb{R}^{N},$ $N\geq3,$
such that $\partial\Omega\in W^{2}L^{\theta,1},$ for some $\theta>N-1.$ Assume
that $f\in L^{N,1}(\Omega)$ fulfills the compatibility condition $\int
_{\Omega}f(x)\mathrm{d}x=0.$ Let $u_{p}\in W^{1,p}(\Omega)$ be a weak solution
to the Neumann problem
\begin{equation}
\left\{
\begin{array}
[c]{lll}%
-\operatorname{div}(\left\vert \nabla u\right\vert ^{p-2}\nabla u)=f(x) &
\text{in} & \Omega\\
\dfrac{\partial u}{\partial\nu}=0 & \text{on} & \partial\Omega.
\end{array}
\right.  \label{nf}%
\end{equation}
Then there exists a constant $C$ depending at most on $N$ and $\Omega$ such
that%
\begin{equation}
\left\Vert \nabla u_{p}\right\Vert _{\infty}^{p-1}\leq C\left\{
\begin{array}
[c]{lll}%
2^{\frac{p}{p-1}}(p-1)^{-\frac{\theta N}{\theta-(N-1)}}\left\Vert f\right\Vert
_{N,1} & \text{if} & 1<p<2\\
&  & \\
p^{\frac{5}{2}+\frac{\theta N}{\theta-(N-1)}}\left\Vert f\right\Vert _{N,1} &
\text{if} & p\geq2.
\end{array}
\right.  \label{Cdp}%
\end{equation}
Moreover, if the assumption $\partial\Omega\in W^{2}L^{\theta,1}$ is replaced
with the assumption that is $\Omega$ convex, then
\begin{equation}
\left\Vert \nabla u_{p}\right\Vert _{\infty}^{p-1}\leq C\left\{
\begin{array}
[c]{lll}%
2^{\frac{p}{p-1}}\left\Vert f\right\Vert _{N,1} & \text{if} & 1<p<2\\
p^{\frac{5}{2}}\left\Vert f\right\Vert _{N,1} & \text{if} & p\geq2
\end{array}
\right.  \label{CCdp}%
\end{equation}
where $C$ is a constant depending at most on $N$ and $\Omega.$
\end{theorem}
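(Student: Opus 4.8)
The plan is to apply the Cianchi--Maz'ya estimate \eqref{cm} to the regularized operators generated by $a_\epsilon(t)=(t^2+\epsilon)^{(p-2)/2}$, and to make the dependence of the abstract constant $C(\Omega,i_a,s_a)$ on the indices $i_a,s_a$ fully explicit. A direct computation gives $t\,a_\epsilon'(t)/a_\epsilon(t)=(p-2)t^2/(t^2+\epsilon)$, whence
\[
i_{a_\epsilon}=\min\{0,p-2\},\qquad s_{a_\epsilon}=\max\{0,p-2\},
\]
independently of $\epsilon$. In particular $1+i_{a_\epsilon}=p-1$ and $1+s_{a_\epsilon}=1$ when $1<p<2$, while $1+i_{a_\epsilon}=1$ and $1+s_{a_\epsilon}=p-1$ when $p\ge2$. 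Since these indices are bounded uniformly in $\epsilon$, the estimate \eqref{cm} holds for each regularized solution $u_{p,\epsilon}$ with a constant uniform in $\epsilon$, and the whole problem reduces to bookkeeping the $(i_a,s_a)$-dependence of that constant and then letting $\epsilon\to0$.

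First I would revisit the estimates of Section 2 of \cite{CM} and record, at each step, the explicit constant in terms of $1+i_a$ and $1+s_a$, keeping the remaining dimensional and domain factors inside the harmless constant $C=C(N,\Omega)$. This is where the asymmetry of \eqref{Cdp}--\eqref{CCdp} originates: the ellipticity parameter $1+i_{a_\epsilon}$ degenerates like $p-1$ as $p\downarrow1$ (producing negative powers of $p-1$), while the growth parameter $1+s_{a_\epsilon}$ blows up like $p-1$ as $p\to\infty$ (producing positive powers of $p$). Converting \eqref{cm} into the $\|\nabla u\|_\infty^{p-1}$ form is then immediate from $b_\epsilon(t)\to t^{p-1}$: a bound $\|\nabla u_{p,\epsilon}\|_\infty\le C\,b_\epsilon^{-1}(\|f\|_{N,1})$ passes in the limit to $\|\nabla u_{p}\|_\infty^{p-1}\le C^{p-1}\|f\|_{N,1}$, so the entire $p$-dependence of the statement should come out as $C^{p-1}$ once the index-dependent constants have been estimated.

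Next I would trace the first step of Section 4 of \cite{CM}, where the boundary hypothesis $\partial\Omega\in W^2L^{\theta,1}$ enters. The exponent $\theta N/(\theta-(N-1))$ in \eqref{Cdp} should arise from a Lorentz--Sobolev embedding on $\partial\Omega$ applied to the second fundamental form, combined with the interior estimates of Section 2 for the level sets of $|\nabla u|$; since this is the only place the boundary term appears, it factorizes out of the interior contribution, and raising the combined constant to the power $p-1$ yields the displayed factor $(p-1)^{-\theta N/(\theta-(N-1))}$ for $1<p<2$ and $p^{\,\theta N/(\theta-(N-1))}$ for $p\ge2$. For the convex case I would instead invoke Theorems 1.2 and 1.4 of \cite{CM}, where convexity gives the boundary curvature term a favourable sign so that it may simply be discarded; this removes the boundary factor and leaves only the interior powers $2^{p/(p-1)}$ and $p^{5/2}$ of \eqref{CCdp}.

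The last step is the passage $\epsilon\to0$: the solutions $u_{p,\epsilon}$ converge to the weak solution $u_p$ of \eqref{nf} (up to the additive normalization in the Neumann case), and the uniform-in-$\epsilon$ gradient bound survives in the limit by lower semicontinuity together with the standard local $C^{1,\alpha}$ estimates for $u_{p,\epsilon}$. I expect the main obstacle to be the bookkeeping itself: the two regimes $1<p<2$ and $p\ge2$ must be treated separately throughout, since the roles of $i_{a_\epsilon}$ and $s_{a_\epsilon}$ interchange; every index-dependent constant must be carried through several nested estimates; and one must verify that the accumulated powers of $p-1$ and $p$ collapse exactly to the clean exponents $\tfrac52$, $p/(p-1)$ and $\theta N/(\theta-(N-1))$ claimed, rather than to larger values. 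Controlling the boundary Lorentz-embedding constant uniformly, and confirming that it contributes precisely this exponent, will be the most delicate point.
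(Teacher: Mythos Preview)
Your overall strategy---regularize by $a_\epsilon$, track the $(i_{a_\epsilon},s_{a_\epsilon})$-dependence through the Cianchi--Maz'ya argument, then let $\epsilon\to0$---matches the paper exactly. The paper also confirms your expectation that convexity simply kills the boundary curvature term, and that the exponent $\theta N/(\theta-(N-1))$ arises from the $L^{\theta,1}$ control of the second fundamental form (concretely, from solving for the level $\overline{s}_p$ at which the curvature integral is dominated by $\xi_p/K_p$).

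However, your conversion step contains a genuine gap that would wreck the stated exponents. You propose to obtain \eqref{cm} in the form $\|\nabla u_{p,\epsilon}\|_\infty\le C\,b_\epsilon^{-1}(\|f\|_{N,1})$ and then take $(p-1)$-th powers to get $\|\nabla u_p\|_\infty^{p-1}\le C^{p-1}\|f\|_{N,1}$. But if the tracked constant $C$ is, say, polynomial in $p$ for $p\ge2$, then $C^{p-1}$ is super-exponential in $p$, not $p^{5/2+\theta N/(\theta-(N-1))}$; and for $1<p<2$, any finite $C=C(p)$ gives $C^{p-1}\to1$ as $p\to1^+$, which cannot reproduce the divergent factor $2^{p/(p-1)}(p-1)^{-\theta N/(\theta-(N-1))}$. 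In other words, the constant in \eqref{cm} sits on the wrong side of $b_\epsilon$ for your purposes.

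The paper avoids this by \emph{not} passing through \eqref{cm}. It re-runs the Cianchi--Maz'ya level-set argument so as to land directly on an inequality of the shape
\[
b_\epsilon(\|\nabla u_\epsilon\|_\infty)\le S_3\,\|f\|_{N,1},
\]
with $S_3$ built explicitly from three ingredients: (i) an energy bound $\int_\Omega B_\epsilon(|\nabla u_\epsilon|)\,dx\le C''C_p\,\psi_\epsilon(\|f\|_{N,1})$ with $C_p=2^{1/(p-1)}$ for $1<p<2$ and $C_p=p$ for $p\ge2$ (this is where $2^{p/(p-1)}$ and $p^2$ first appear, via the comparison $\widehat{B_\epsilon}^{\,-1}\le C_p\,b_\epsilon^{-1}$); (ii) a quadratic inequality in $b_\epsilon(\|\nabla u\|_\infty)$ coming from the differential identity and a pointwise bound $F_\epsilon(t)\le t\,b_\epsilon(t)^2\le K_pF_\epsilon(t)$ with $K_p=\max\{3,2p-1\}$, resolved by an elementary algebraic lemma; and (iii) the choice of the threshold $s_p\sim(\xi_p/K_p)^{\theta N/(\theta-(N-1))}$ to absorb the curvature term. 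Multiplying these contributions gives $S_3$ with the exact exponents claimed, and letting $\epsilon\to0$ turns $b_\epsilon(\|\nabla u_\epsilon\|_\infty)$ into $\|\nabla u_p\|_\infty^{p-1}$ \emph{without} any further $(p-1)$-th power. So the bookkeeping you anticipate is real, but it must be done inside a derivation targeting $b_\epsilon(\|\nabla u\|_\infty)\le S_3\|f\|_{N,1}$, not post-processed from \eqref{cm}.
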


\begin{theorem}
\label{main2}Let $\Omega$ be a bounded domain of $\mathbb{R}^{N},$ $N\geq3,$
such that $\partial\Omega\in W^{2}L^{\theta,1}$ for some $\theta>N-1,$ and
assume that $f\in L^{N,1}(\Omega).$ Let $u_{p}\in W_{0}^{1,p}(\Omega)$ be a
weak solution of the Dirichlet problem
\begin{equation}
\left\{
\begin{array}
[c]{lll}%
-\operatorname{div}(\left\vert \nabla u\right\vert ^{p-2}\nabla u)=f(x) &
\mathrm{in} & \Omega\\
u=0 & \mathrm{on} & \partial\Omega.
\end{array}
\right.  \label{df}%
\end{equation}
Then the estimate (\ref{Cdp}) holds for a constant $C,$ depending at most on
$N$ and $\Omega.$ Moreover, if the assumption $\partial\Omega\in
W^{2}L^{\theta,1}$ is replaced with the assumption that is $\Omega$ convex,
then (\ref{CCdp}) holds for constant $C$ depending at most on $N$ and
$\Omega.$
\end{theorem}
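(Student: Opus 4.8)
The plan is to establish Theorem~\ref{main2} by running, with only inessential changes, the same argument that yields the Neumann estimate of Theorem~\ref{main}. The enabling fact is the remark of Cianchi and Maz'ya that their Dirichlet estimate (Theorem~1.3 of \cite{CM}) is obtained from their Neumann estimate (Theorem~1.1 of \cite{CM}) through a small change in the proof: the pointwise rearrangement and symmetrization bounds of Section~2 of \cite{CM} and the differential-inequality argument of Section~4 are insensitive to the choice of homogeneous boundary condition, the condition $u=0$ on $\partial\Omega$ entering only through the treatment of the boundary terms. Consequently the abstract inequality (\ref{cm}), $\|\nabla u\|_\infty\le C\,b^{-1}(\|f\|_{N,1})$ with $C=C(\Omega,i_a,s_a)$, holds verbatim for (\ref{df}), and---what matters for us---the way $C$ depends on $i_a$ and $s_a$ is exactly as in the Neumann case. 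I also note that the Dirichlet problem carries no compatibility constraint, so the hypothesis $\int_\Omega f\,\mathrm{d}x=0$ of Theorem~\ref{main} is absent here.

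First I would regularize, setting $a_\epsilon(t)=(t^2+\epsilon)^{(p-2)/2}$ for $\epsilon>0$ and recording the two quantities that control the constant. Differentiating,
\begin{equation}
\frac{t\,a_\epsilon'(t)}{a_\epsilon(t)}=(p-2)\,\frac{t^2}{t^2+\epsilon},
\end{equation}
which ranges over $(0,p-2)$ when $p\ge2$ and over $(p-2,0)$ when $1<p<2$, giving
\begin{equation}
i_{a_\epsilon}=\min\{0,p-2\},\qquad s_{a_\epsilon}=\max\{0,p-2\}.
\end{equation}
Since $p>1$, we have $-1<i_{a_\epsilon}\le s_{a_\epsilon}<\infty$ uniformly in $\epsilon$, so (\ref{iasa}) holds and the theory of \cite{CM} applies to each regularized Dirichlet problem. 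Moreover $b_\epsilon(t)=a_\epsilon(t)\,t\to t^{p-1}$ as $\epsilon\to0^+$, so $b_\epsilon^{-1}(s)\to s^{1/(p-1)}$, which is what converts (\ref{cm}) into the power $\|\nabla u_p\|_\infty^{p-1}$ appearing on the left of (\ref{Cdp}).

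The decisive step, and the main obstacle, is the explicit tracking---already performed in the proof of Theorem~\ref{main}---of how $C(\Omega,i_{a_\epsilon},s_{a_\epsilon})$ depends on $i_{a_\epsilon}$, $s_{a_\epsilon}$ and on the boundary data $\partial\Omega\in W^2L^{\theta,1}$, $\theta>N-1$. The structural part of the estimate contributes the factors $2^{p/(p-1)}$ for $1<p<2$ and $p^{5/2}$ for $p\ge2$, while the boundary part, entering through the norm of $\partial\Omega$ in $W^2L^{\theta,1}$, contributes $(p-1)^{-\theta N/(\theta-(N-1))}$ and $p^{\theta N/(\theta-(N-1))}$ respectively; this is precisely the split between (\ref{Cdp}) and (\ref{CCdp}). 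Because the Neumann-to-Dirichlet modification alters none of these dependences, applying (\ref{cm}) to the regularized Dirichlet solution $u_{p,\epsilon}$ produces the bound (\ref{Cdp}) for $\|\nabla u_{p,\epsilon}\|_\infty^{p-1}$ with a constant $C$ depending at most on $N$ and $\Omega$.

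Finally I would pass to the limit $\epsilon\to0^+$, as in the approximation steps of Section~4 of \cite{CM}: monotonicity of the regularized operators together with the uniform-in-$\epsilon$ gradient bound give convergence of $u_{p,\epsilon}$ to the weak solution $u_p$ of (\ref{df}), with $\nabla u_{p,\epsilon}\to\nabla u_p$ strongly enough to transfer the estimate, and $b_\epsilon^{-1}(s)\to s^{1/(p-1)}$ then delivers (\ref{Cdp}) for $u_p$. The convex case is handled identically: replacing $\partial\Omega\in W^2L^{\theta,1}$ by convexity of $\Omega$ removes the boundary factor (compare Theorems~1.2 and~1.4 of \cite{CM}), leaving only the structural factors and hence (\ref{CCdp}). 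The one genuine verification beyond Theorem~\ref{main} is that the ``small change'' from Neumann to Dirichlet is $p$-neutral, which the preceding discussion confirms.
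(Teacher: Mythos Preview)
Your proposal is correct and follows essentially the same approach as the paper. Indeed, the paper proves Theorems~\ref{main} and~\ref{main2} simultaneously in Subsection~\ref{sec3.1}: the only place the boundary condition enters is through the identities (\ref{dtrace1})--(\ref{tr1}) (Neumann) versus (\ref{dtrace})--(\ref{tr}) (Dirichlet), both of which feed into the same inequality (\ref{aux0}) with the same curvature function $k$, after which the tracking of $p$-dependent constants and the approximation steps are identical; this is precisely the ``$p$-neutrality'' you identify.
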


Our approach to determine how $C$ depends on $p$ in the estimate (\ref{CM1})
relies on the arguments by Cianchi and Maz'ya in \cite{CM}. Essentially, we
track their proofs and identify, for the regularization $a_{\epsilon
}(t):=(t^{2}+\epsilon)^{(p-2)/2},$ the dependence on $p$ in each step.
Following this plan we also were able to deduce estimates depending explicitly
on $i_{a}$ and $s_{a}$ for a more general function $a$ satisfying
(\ref{iasa}), not necessarily related to the $p$-Laplacian. We will commented
about this further below in this introduction.

In the case where $\Omega\ $is not convex we face a difficulty when the
boundary regularity comes into play. The issue has to do with the estimation
of $s_{p}^{-1}$ where $s_{p}$ is a positive parameter that appears from an
interaction between certain constants depending on $p$ and a nonnegative
function $k$ associated with the curvature of $\partial\Omega.$ We overcome
this difficulty by assuming that $\partial\Omega\in W^{2}L^{\theta,1}$ for
some $\theta>N-1.$ This assumption, which is lightly stronger than
$\partial\Omega\in W^{2}L^{N-1,1},$ guarantees that $k\in L^{\theta
,1}(\partial\Omega)$ and then the embedding $L^{\theta,1}\hookrightarrow
L^{N-1,1}$ turns out to provide estimates to $s_{p}^{-1}$ that are explicit
with respect to $p.$

The approach used by Cianchi and Maz'ya in \cite{CM} does not work directly
when $N=2.$ The reason is that the arguments used to estimate certain
quantities involving\ $f^{\ast}$ (the decreasing rearrangement of $f$ ) in
terms of $\left\Vert f\right\Vert _{N,1}$ call for $N>2$ (see Lemmas 3.5 and
3.6 of that paper). However, as remarked by Cianchi and Maz'ya in \cite{CM15}
a version of their results in \cite{CM} also holds when $N=2$ if the
assumption $f\in L^{2,1}(\Omega)$ is replaced with the slightly stronger
assumption $f\in L^{q}(\Omega)$ for some some $q>2.$ It is worth mentioning
that in \cite{Mz2} Maz'ya derived the estimate $\left\Vert \nabla u\right\Vert
_{\infty}\leq C\left\Vert f\right\Vert _{q}$ for the weak solution $u$ to the
Neumann problem for the Laplacian (i.e. $p=2$) under the assumptions: $\Omega$
convex and $f\in L^{q}(\Omega)$ for some $q>N\geq2$. Inspired by these facts
we also consider the case $N=2$ and obtain the following result where
$\left\Vert \cdot\right\Vert _{q}$ denotes the standard norm of $L^{q}%
(\Omega)$.

\begin{theorem}
\label{N=2}Let $\Omega$ be a bounded domain of $\mathbb{R}^{2}$ such that
$\partial\Omega\in W^{2}L^{\theta,1},$ for some $\theta>1.$ Let $f\in
L^{q}(\Omega),$ for some $q>2,$ and let $u_{p}$ be either a weak solution of
the Neumann problem (\ref{nf}), under the compatibility condition
$\int_{\Omega}f(x)\mathrm{d}x=0,$ or a solution of the Dirichlet problem
(\ref{df}). Then,
\begin{equation}
\left\Vert \nabla u_{p}\right\Vert _{\infty}^{p-1}\leq C\left\{
\begin{array}
[c]{lll}%
2^{\frac{p}{p-1}}(p-1)^{-\frac{2\theta}{\theta-1}}\left\Vert f\right\Vert _{q}
& \text{if} & 1<p<2\\
&  & \\
p^{\frac{5}{2}+\frac{2\theta}{\theta-1}}\left\Vert f\right\Vert _{q} &
\text{if} & p\geq2,
\end{array}
\right.  \label{C2a}%
\end{equation}
for some constant $C$ depending at most on $q$ and $\Omega.$ Moreover, if the
assumption $\partial\Omega\in W^{2}L^{\theta,1}$ is replaced with the
assumption that is $\Omega$ convex, then
\begin{equation}
\left\Vert \nabla u_{p}\right\Vert _{\infty}^{p-1}\leq C\left\{
\begin{array}
[c]{lll}%
2^{\frac{p}{p-1}}\left\Vert f\right\Vert _{q} & \text{if} & 1<p<2\\
p^{\frac{5}{2}}\left\Vert f\right\Vert _{q} & \text{if} & p\geq2,
\end{array}
\right.  \label{C2b}%
\end{equation}
for some constant $C$ depending at most on $q$ and $\Omega.$
\end{theorem}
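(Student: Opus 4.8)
The plan is to reproduce, for $N=2$, the argument that establishes Theorems \ref{main} and \ref{main2}, tracking the dependence on $p$ at each stage and isolating the single point at which the dimension enters essentially. The Cianchi--Maz'ya scheme reduces the gradient bound to a chain of rearrangement estimates in which the solution of the regularized problem (with $a_{\epsilon}(t)=(t^{2}+\epsilon)^{(p-2)/2}$) is controlled by integral functionals of $f^{\ast}$. Every $p$-dependent factor produced along this chain---the regularization constants, the structural constants $i_{a}=s_{a}=p-2$, and the boundary parameter $s_{p}^{-1}$---is generated in exactly the same way as for $N\geq3$, and none of them refers to the dimension through anything other than the explicit substitution $N=2$. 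In particular, the exponent $\frac{\theta N}{\theta-(N-1)}$ appearing in (\ref{Cdp}) specializes to $\frac{2\theta}{\theta-1}$, and the factors $2^{p/(p-1)}$ and $p^{5/2}$ are unchanged; the convex case likewise comes from setting $N=2$ in (\ref{CCdp}).

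The one place where $N\geq3$ is used genuinely is in the analogues of Lemmas 3.5 and 3.6 of \cite{CM}, which bound the relevant functionals of $f^{\ast}$ by $\left\Vert f\right\Vert_{N,1}$. For $N=2$ the weight exponent $1-\tfrac{1}{N}=\tfrac12$ makes these integral estimates borderline and they fail to close with $\left\Vert f\right\Vert_{2,1}$ on the right-hand side; this is precisely the obstruction noted in the introduction. Following the remark of \cite{CM15}, I would replace these two lemmas by their $L^{q}$ versions: for $f\in L^{q}(\Omega)$ with $q>2$, the quantity $\int_{0}^{|\Omega|}t^{-1/2}f^{\ast}(t)\,\mathrm{d}t$ defining $\left\Vert f\right\Vert_{2,1}$ is controlled by H\"older's inequality with conjugate exponents $q$ and $q'=q/(q-1)$:
\begin{equation*}
\int_{0}^{|\Omega|}t^{-1/2}f^{\ast}(t)\,\mathrm{d}t\leq\left(\int_{0}^{|\Omega|}t^{-q'/2}\,\mathrm{d}t\right)^{1/q'}\left\Vert f\right\Vert_{q},
\end{equation*}
and the integral on the right converges exactly because $q>2$ forces $q'/2<1$. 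This is the embedding $L^{q}(\Omega)\hookrightarrow L^{2,1}(\Omega)$, with a constant depending only on $q$ and $|\Omega|$. Crucially, this constant is independent of $p$, so substituting it into the $N=2$ specialization of (\ref{Cdp}) converts the right-hand side from $C(\cdots)\left\Vert f\right\Vert_{2,1}$ into $C(\cdots)\left\Vert f\right\Vert_{q}$ without altering any $p$-dependent factor; the constant $C$ now absorbs the embedding constant and hence depends on $q$ and $\Omega$, as stated.

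With these replacements in place, the argument proceeds as before. The boundary-regularity step is handled exactly as for $N\geq3$: the hypothesis $\partial\Omega\in W^{2}L^{\theta,1}$ with $\theta>1$ guarantees that the curvature function $k$ lies in $L^{\theta,1}(\partial\Omega)$, and the embedding $L^{\theta,1}(\partial\Omega)\hookrightarrow L^{1,1}(\partial\Omega)=L^{1}(\partial\Omega)$ (the $N=2$ case of $L^{\theta,1}\hookrightarrow L^{N-1,1}$) furnishes the estimate for $s_{p}^{-1}$ that produces the exponent $\frac{2\theta}{\theta-1}$ in (\ref{C2a}). For the Neumann problem one imposes the compatibility condition $\int_{\Omega}f\,\mathrm{d}x=0$, while for the Dirichlet problem this is dropped and the homogeneous boundary value is used instead, exactly as in the passage from Theorem \ref{main} to Theorem \ref{main2}; the convex case yields (\ref{C2b}) by the same specialization of (\ref{CCdp}).

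I expect the main obstacle to be bookkeeping rather than conceptual: one must verify that the two $L^{q}$-replacement lemmas introduce no hidden $p$-dependence and that their constants, together with the embedding constant above, can be cleanly separated from the $p$-factors so that the exponents in (\ref{C2a}) and (\ref{C2b}) coincide with the $N=2$ values of those in (\ref{Cdp}) and (\ref{CCdp}). A secondary point requiring care is that the $L^{q}$ estimates must be strong enough to feed the same downstream inequalities that $\left\Vert f\right\Vert_{N,1}$ fed for $N\geq3$---that is, the entire chain must still close with the slightly stronger norm---but since $\left\Vert f\right\Vert_{2,1}$ is dominated by $\left\Vert f\right\Vert_{q}$ for $q>2$, this is guaranteed once the embedding is in hand.
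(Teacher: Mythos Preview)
Your overall strategy matches the paper's: specialize the $N\geq 3$ argument to $N=2$, track the $p$-dependence, and replace the two rearrangement lemmas (Lemmas~3.5 and~3.6 of \cite{CM}) by $L^{q}$ versions. However, the concrete replacement you propose does not close the argument. You correctly note that for $N=2$ those lemmas fail to bound the two key functionals
\[
\int_{0}^{|\Omega|} r^{-1/2}\phi(r)\,\mathrm{d}r
\qquad\text{and}\qquad
\int_{0}^{|\Omega|} r^{-1}\!\int_{0}^{r}f^{\ast}(\rho)^{2}\,\mathrm{d}\rho\,\mathrm{d}r
\]
by $\|f\|_{2,1}$ and $\|f\|_{2,1}^{2}$. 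But then the embedding $\|f\|_{2,1}\leq C_{q,\Omega}\|f\|_{q}$ that you display is of no use: if these integrals are not controlled by $\|f\|_{2,1}$, there is no ``$N=2$ specialization of (\ref{Cdp})'' with $\|f\|_{2,1}$ on the right-hand side into which the embedding can be substituted. Your closing claim that the chain is ``guaranteed [to close] once the embedding is in hand'' is exactly the step that fails; it contradicts your own earlier (correct) observation that the lemmas are borderline at $N=2$.

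What the paper does instead is bypass $\|f\|_{2,1}$ entirely and bound the two integrals above \emph{directly} by $\|f\|_{q}$ and $\|f\|_{q}^{2}$. The device is the pointwise H\"older estimate
\[
\int_{0}^{r}f^{\ast}(\tau)^{2}\,\mathrm{d}\tau\;\leq\;\|f\|_{q}^{2}\,r^{\,1-2/q},\qquad r>0,
\]
which, because $q>2$, contributes an extra positive power of $r$ that makes both outer integrals convergent (for the second, the integrand becomes $r^{-2/q}$; for the first, after the chain $\int r^{-1/2}\phi\leq\int r^{-1/2}\phi^{\ast}\leq\int r^{-1}(\int_{0}^{r}f^{\ast 2})^{1/2}$ used already in \cite{CM}, the integrand becomes $r^{-1/2-1/q}$). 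These are the genuine $L^{q}$ replacements for Lemmas~3.5 and~3.6; they introduce only $q$- and $|\Omega|$-dependent constants. Once they are in place, the remainder of your outline---the handling of the boundary term via $s_{p}$, the exponent $\tfrac{2\theta}{\theta-1}$, and the convex case---agrees with the paper.
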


We believe that our results above can be useful in problems involving the
behavior of solutions to (\ref{nf}) and (\ref{df}) as $p\rightarrow1^{+}$ or
$p\rightarrow+\infty.$ In \cite{AEJ} they are crucial to determine the
limiting behavior, as $p\rightarrow+\infty,$ of the solution $u_{p}$ to a
family of $p$-Laplacian problems involving gradient and exponential terms.

Our proofs exploit the structural properties
\begin{equation}
-1<i_{a_{\epsilon}}:=\inf_{t>0}\frac{ta_{\epsilon}^{\prime}(t)}{a_{\epsilon
}(t)}\leq s_{a_{\epsilon}}:=\sup_{t>0}\frac{ta_{\epsilon}^{\prime}%
(t)}{a_{\epsilon}(t)}<\infty\label{iasae}%
\end{equation}
of the regularization $a_{\epsilon}$ instead of its particular form. This
strategy allows us generalize our estimates for operators given by a function
$a$ (not necessarily the $p$-Laplacian). In fact, we obtain the global
estimate
\begin{equation}
b(\left\Vert \nabla u\right\Vert _{\infty})\leq C\Lambda(i_{a},s_{a}%
)\left\Vert f\right\Vert _{N,1} \label{Gama}%
\end{equation}
for a solution $u$ to either (\ref{nf}) or (\ref{df}). Here, $C$ depends at
most on $N$ and $\Omega,$ and $\Lambda(i_{a},s_{a})$ is given explicitly in
terms of $i_{a}$ and $s_{a}.$

This paper is organized as follows. In Section \ref{sec1} we derive some
inequalities that are the counterparts of those developed in Section 2 of
\cite{CM}. In Section \ref{sec2} we reproduce the first step of Section 4 of
\cite{CM} to achieve (\ref{CM1}) under regularity assumptions. In the
sequence, by following the remaining steps in Section 4 of \cite{CM}, we
present the proofs of Theorems \ref{main} and \ref{main2}, in Subsection
\ref{sec3.1}, and of Theorem \ref{N=2} in Subsection \ref{sec3.2}. In Section
\ref{aeps1} we indicate how to arrive at (\ref{Gama}).

\section{Preliminaries\label{sec1}}

For $\epsilon>0$ let us define the function $a_{\epsilon}:[0,\infty
)\rightarrow(0,\infty)$ as%
\begin{equation}
a_{\epsilon}(t):=(t^{2}+\epsilon)^{\frac{p-2}{2}} \label{a}%
\end{equation}
where $p>1.$ Note that
\begin{equation}
a_{\epsilon}\in C^{\infty}([0,\infty))\text{ \ and \ }a_{\epsilon}(0)>0.
\label{cinf}%
\end{equation}

As%
\[
\frac{ta_{\epsilon}^{\prime}(t)}{a_{\epsilon}(t)}=(p-2)\frac{t^{2}}%
{t^{2}+\epsilon}\quad\forall\,t>0
\]
one can readily verify that $a_{\epsilon}$ satisfies (\ref{iasae}). In fact,
one has
\begin{equation}
i_{a_{\epsilon}}:=\inf_{t>0}\frac{ta_{\epsilon}^{\prime}(t)}{a_{\epsilon}%
(t)}=\min\left\{  p-2,0\right\}  >-1 \label{iae}%
\end{equation}
and%
\begin{equation}
s_{a_{\epsilon}}:=\sup_{t>0}\frac{ta_{\epsilon}^{\prime}(t)}{a_{\epsilon}%
(t)}=\max\left\{  p-2,0\right\}  <\infty. \label{sae}%
\end{equation}
Moreover, as $i_{a}=i_{b}=p-2,$ one has
\begin{equation}
\min\left\{  i_{a},0\right\}  =i_{a_{\epsilon}}\leq s_{a_{\epsilon}}%
=\max\left\{  s_{a},0\right\}  . \label{iaeps}%
\end{equation}

Let us define the strictly increasing functions $b_{\epsilon}:[0,\infty
)\rightarrow\lbrack0,\infty)$ and $B_{\epsilon}:[0,\infty)\rightarrow
\lbrack0,\infty),$ respectively as%
\begin{equation}
b_{\epsilon}(t)=a_{\epsilon}(t)t \label{b}%
\end{equation}
and%
\[
B_{\epsilon}(t)=\int_{0}^{t}b_{\epsilon}(\tau)\mathrm{d}\tau.
\]
The monotonicity of $b_{\epsilon}$ follows from (\ref{iae}) as
\[
b_{\epsilon}^{\prime}(t)=a_{\epsilon}(t)+ta_{\epsilon}^{\prime}(t)=a_{\epsilon
}(t)\left(  1+\frac{ta_{\epsilon}^{\prime}(t)}{a_{\epsilon}(t)}\right)  \geq
a_{\epsilon}(t)\left(  1+i_{a_{\epsilon}}\right)  >0.
\]

It is simple to check that
\begin{equation}
\lim_{\epsilon\rightarrow0^{+}}b_{\epsilon}(t)=t^{p-1}\text{\ uniformly in
}[0,M]\text{ for every }M>0 \label{bconv}%
\end{equation}
and%
\begin{equation}
\lim_{\epsilon\rightarrow0^{+}}B_{\epsilon}(t)=\frac{t^{p}}{p}%
\text{\ uniformly in }[0,M]\text{ for every }M>0. \label{Bconv}%
\end{equation}
Moreover, it follows from (\ref{bconv}) that%

\begin{equation}
\lim_{\epsilon\rightarrow0^{+}}a_{\epsilon}(\left\vert x\right\vert
)x=\left\vert x\right\vert ^{p-2}x\text{ \ uniformly in }\left\{
x\in\mathbb{R}^{N}:\left\vert x\right\vert \leq M\right\}  \text{ for every
}M>0. \label{aconv}%
\end{equation}

We remark from (\ref{cinf})-(\ref{aconv}) that the conclusions of Lemma 3.3 of
\cite{CM} hold for $a_{\epsilon},$ $b_{\epsilon}$ and $B_{\epsilon}.$

\begin{lemma}
\label{Lem1}Let $\psi_{\epsilon}:[0,\infty)\rightarrow\lbrack0,\infty)$ be the
function defined as%
\begin{equation}
\psi_{\epsilon}(s):=sb_{\epsilon}^{-1}(s). \label{psi}%
\end{equation}
If $C\geq1,$ then%
\begin{equation}
C\psi_{\epsilon}(s)\leq\psi_{\epsilon}(Cs)\quad\forall\text{ }s\geq0.
\label{psitil}%
\end{equation}

\end{lemma}

\begin{proof}
As $b^{-1}:[0,\infty)\rightarrow\lbrack0,\infty)$ is increasing and $C\geq1$,
one has
\[
C\psi_{\epsilon}(s)=Csb_{\epsilon}^{-1}(s)\leq Csb_{\epsilon}^{-1}%
(Cs)=\psi_{\epsilon}(Cs)\quad\forall\text{ }s\geq0.
\]

\end{proof}

\begin{lemma}
\label{h0}Let $h:(0,\infty)\rightarrow(0,\infty)$ be a function of class
$C^{1}$ such that%
\begin{equation}
\alpha\leq\frac{th^{\prime}(t)}{h(t)}\leq\beta\quad\forall\,t>0. \label{h}%
\end{equation}
Then,
\begin{equation}
\min\left\{  c^{\alpha},c^{\beta}\right\}  \leq\frac{h(\rho t)}{h(t)}\leq
\max\left\{  c^{\alpha},c^{\beta}\right\}  \quad\forall\,c,t>0. \label{h1}%
\end{equation}

\end{lemma}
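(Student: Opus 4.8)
The plan is to recast the hypothesis (\ref{h}) as a pair of monotonicity statements and then compare $h$ at the points $ct$ and $t$. (I read the ratio on the left of (\ref{h1}) as $h(ct)/h(t)$, i.e.\ with $\rho=c$, which matches the powers $c^{\alpha},c^{\beta}$ on the right.)

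First I would observe that (\ref{h}) is equivalent to two sign conditions on auxiliary functions. Define $F(t):=t^{-\alpha}h(t)$ and $G(t):=t^{-\beta}h(t)$ on $(0,\infty)$. Differentiating,
\[
F'(t)=t^{-\alpha-1}\bigl(th'(t)-\alpha h(t)\bigr),\qquad G'(t)=t^{-\beta-1}\bigl(th'(t)-\beta h(t)\bigr).
\]
The left inequality in (\ref{h}) gives $th'(t)\ge\alpha h(t)$, hence $F'\ge0$ and $F$ is non-decreasing; the right inequality gives $th'(t)\le\beta h(t)$, hence $G'\le0$ and $G$ is non-increasing.

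Next I would split on the size of $c$. If $c\ge1$ then $ct\ge t$, so monotonicity of $F$ and $G$ yields $F(ct)\ge F(t)$ and $G(ct)\le G(t)$; rearranging and cancelling $h(t)>0$ gives $c^{\alpha}\le h(ct)/h(t)\le c^{\beta}$. Since $\alpha\le\beta$ and $\log c\ge0$ one has $c^{\alpha}\le c^{\beta}$, so these bounds are exactly $\min\{c^{\alpha},c^{\beta}\}$ and $\max\{c^{\alpha},c^{\beta}\}$. If $0<c\le1$ then $ct\le t$, the two monotonicity inequalities reverse, and one gets $c^{\beta}\le h(ct)/h(t)\le c^{\alpha}$; here $\log c\le0$ forces $c^{\beta}\le c^{\alpha}$, so again the outer terms are precisely the minimum and the maximum of $c^{\alpha},c^{\beta}$. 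Combining the two cases proves (\ref{h1}).

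The computation is elementary; the only point needing care is the bookkeeping of which of $c^{\alpha},c^{\beta}$ is the smaller and which the larger, a choice that flips at $c=1$ and is exactly what the $\min/\max$ formulation absorbs. An equivalent route would integrate $\alpha/\tau\le(\log h)'(\tau)\le\beta/\tau$ from $t$ to $ct$ and exponentiate, the same case split reappearing through the sign of $\int_{t}^{ct}\tau^{-1}\,\mathrm{d}\tau=\log c$.
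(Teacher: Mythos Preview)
Your proof is correct and is essentially the same as the paper's: the paper rewrites (\ref{h}) as $\frac{d}{dt}\log(t^{\alpha})\le\frac{d}{dt}\log h(t)\le\frac{d}{dt}\log(t^{\beta})$, integrates over $[ct,t]$ or $[t,ct]$ according to whether $c\le1$ or $c>1$, and exponentiates---precisely the ``equivalent route'' you describe at the end, and equivalent to your monotonicity argument for $F(t)=t^{-\alpha}h(t)$ and $G(t)=t^{-\beta}h(t)$. Your reading $\rho=c$ in the statement is the intended one.
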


\begin{proof}
It follows from (\ref{h}) that%
\begin{equation}
\frac{d}{dt}\log(t^{\alpha})\leq\frac{d}{dt}\log(h(t))\leq\frac{d}{dt}%
\log(t^{\alpha}). \label{h4}%
\end{equation}
If $0<c\leq1,$ then we integrate (\ref{h4}) over the interval $[ct,t]$ to
obtain the inequalities%
\[
\log(c^{-\alpha})\leq\log(h(t)/h(ct))\leq\log(c^{-\beta})
\]
which leads to (\ref{h1}) after exponentiation. Analogously, the integration
of (\ref{h4}) over $[t,ct]$ followed by exponentiation yields (\ref{h1}) if
$c>1.$
\end{proof}

We note that%
\[
\frac{tb_{\epsilon}^{\prime}(t)}{b_{\epsilon}(t)}=t\frac{a_{\epsilon
}(t)+ta_{\epsilon}^{\prime}(t)}{ta_{\epsilon}(t)}=1+\frac{ta_{\epsilon
}^{\prime}(t)}{a_{\epsilon}(t)}%
\]
so that%
\begin{equation}
\min\left\{  1,p-1\right\}  =1+i_{a_{\epsilon}}\leq\frac{tb_{\epsilon}%
^{\prime}(t)}{b_{\epsilon}(t)}\leq1+s_{a_{\epsilon}}=\max\left\{
1,p-1\right\}  \quad\forall\,t>0. \label{h2}%
\end{equation}

Thus, it follows from Lemma \ref{h0} that%
\begin{equation}
\min\left\{  c,c^{p-1}\right\}  \leq\frac{b_{\epsilon}(ct)}{b_{\epsilon}%
(t)}\leq\max\left\{  c,c^{p-1}\right\}  \quad\forall\,c,t>0. \label{h5}%
\end{equation}

In the sequel
\[
m(c,p):=\min\left\{  2c,2c^{p-1},pc,pc^{p-1}\right\}
\]
and
\[
M(c,p):=\max\left\{  2c,2c^{p-1},pc,pc^{p-1}\right\}  .
\]

\begin{lemma}
For each $c>0$ one has%
\begin{equation}
m(c,p)\leq\frac{tb_{\epsilon}(ct)}{B_{\epsilon}(t)}\leq M(c,p)\quad
\forall\,t>0. \label{Bbcp}%
\end{equation}

\end{lemma}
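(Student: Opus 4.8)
The goal is to bound the ratio $tb_\epsilon(ct)/B_\epsilon(t)$ from above and below. The natural strategy is to relate $B_\epsilon(t)$ to $tb_\epsilon(t)$ first, and then bring in the factor $c$ via the already established inequality \eqref{h5}. Concretely, I would write
\begin{equation*}
\frac{tb_\epsilon(ct)}{B_\epsilon(t)}=\frac{tb_\epsilon(t)}{B_\epsilon(t)}\cdot\frac{b_\epsilon(ct)}{b_\epsilon(t)},
\end{equation*}
so the problem splits into estimating each factor separately. The second factor is controlled immediately by \eqref{h5}, which gives $\min\{c,c^{p-1}\}\le b_\epsilon(ct)/b_\epsilon(t)\le\max\{c,c^{p-1}\}$. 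So the main work is to bound the first factor $tb_\epsilon(t)/B_\epsilon(t)$.

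For the first factor, the key observation is that since $B_\epsilon(t)=\int_0^t b_\epsilon(\tau)\,\mathrm{d}\tau$, I can estimate the integral using the monotonicity-type bounds on $b_\epsilon$. From \eqref{h5} with $t$ replaced by the variable of integration, for $\tau=ct$ with $0\le c\le 1$ one has $b_\epsilon(c t)\le \max\{c,c^{p-1}\}\, b_\epsilon(t)$. Writing $\tau = ct$ and integrating $b_\epsilon(\tau)$ from $0$ to $t$ via the substitution $c=\tau/t$, the bound $\min\{c,c^{p-1}\} b_\epsilon(t)\le b_\epsilon(ct)\le\max\{c,c^{p-1}\} b_\epsilon(t)$ yields, after integrating the power functions $\int_0^1 \min\{c,c^{p-1}\}\,\mathrm{d}c$ and $\int_0^1\max\{c,c^{p-1}\}\,\mathrm{d}c$ (which equal $\min\{1/2,1/p\}$ and $\max\{1/2,1/p\}$ respectively),
\begin{equation*}
\min\Big\{\tfrac12,\tfrac1p\Big\}\,tb_\epsilon(t)\le B_\epsilon(t)\le\max\Big\{\tfrac12,\tfrac1p\Big\}\,tb_\epsilon(t).
\end{equation*}
Inverting this gives $\min\{2,p\}\le tb_\epsilon(t)/B_\epsilon(t)\le\max\{2,p\}$, which bounds the first factor by $2$ and $p$.

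Finally I would multiply the two factor estimates together. The lower bound becomes $\min\{2,p\}\cdot\min\{c,c^{p-1}\}$ and the upper bound $\max\{2,p\}\cdot\max\{c,c^{p-1}\}$. Distributing the products over the four sign/exponent cases, the lower product realizes as the minimum of $\{2c,2c^{p-1},pc,pc^{p-1}\}$ and the upper product as the maximum of the same set, which are exactly $m(c,p)$ and $M(c,p)$. This completes the proof. The only mildly delicate point is the case analysis when combining $\min\{2,p\}$ with $\min\{c,c^{p-1}\}$: one must check that the product of the two minima really is the minimum over all four cross terms (and likewise for maxima), which holds because all four quantities $2,p,c,c^{p-1}$ are positive, so the minimum of a product over independent index choices factors as the product of the minima. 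I expect this bookkeeping to be the main (though entirely routine) obstacle; the analytic content is entirely contained in the integral estimate for $B_\epsilon(t)$ and the prior inequality \eqref{h5}.
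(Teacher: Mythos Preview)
Your argument is correct and follows the same overall architecture as the paper: you factor $\dfrac{tb_\epsilon(ct)}{B_\epsilon(t)}=\dfrac{tb_\epsilon(t)}{B_\epsilon(t)}\cdot\dfrac{b_\epsilon(ct)}{b_\epsilon(t)}$, bound the second factor by \eqref{h5}, and combine. The only point of divergence is how the bound $\min\{2,p\}\le tb_\epsilon(t)/B_\epsilon(t)\le\max\{2,p\}$ is obtained: the paper uses \eqref{h2} together with the integration-by-parts identity $\int_0^t s\,b_\epsilon'(s)\,\mathrm{d}s=tb_\epsilon(t)-B_\epsilon(t)$, whereas you substitute $\tau=ct$ in $B_\epsilon(t)=\int_0^t b_\epsilon(\tau)\,\mathrm{d}\tau$ and integrate the scaling inequality \eqref{h5} over $c\in(0,1)$. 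Both routes yield the same intermediate estimate (which is \eqref{h6} in the paper), so the difference is purely cosmetic.
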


\begin{proof}
It also follows from (\ref{h2}) that%
\[
\min\left\{  1,p-1\right\}  B_{\epsilon}(t)\leq\int_{0}^{t}sb_{\epsilon
}^{\prime}(s)\mathrm{d}s\leq B_{\epsilon}(t)\max\left\{  1,p-1\right\}
\]
Hence, as%
\[
\int_{0}^{t}sb_{\epsilon}^{\prime}(s)\mathrm{d}s=tb_{\epsilon}(t)-B_{\epsilon
}(t)
\]
we obtain the bounds%
\begin{equation}
\min\left\{  2,p\right\}  \leq\frac{tb_{\epsilon}(t)}{B_{\epsilon}(t)}\leq
\max\left\{  2,p\right\}  . \label{h6}%
\end{equation}

Noticing that
\[
\frac{tb_{\epsilon}(ct)}{B_{\epsilon}(t)}=\frac{tb_{\epsilon}(t)}{B_{\epsilon
}(t)}\frac{b_{\epsilon}(ct)}{b_{\epsilon}(t)}%
\]
we gather (\ref{h5}) and (\ref{h6}) to produce the estimates%
\[
\min\left\{  c,c^{p-1}\right\}  \min\left\{  2,p\right\}  \leq\frac
{tb_{\epsilon}(ct)}{B_{\epsilon}(t)}\leq\max\left\{  2,p\right\}  \max\left\{
c,c^{p-1}\right\}
\]
which lead to (\ref{Bbcp}).
\end{proof}

Let $\widehat{B_{\epsilon}}:[0,\infty)\rightarrow\lbrack0,\infty)$ be function
defined as%
\[
\widehat{B_{\epsilon}}(0)=0\text{ \ and \ }\widehat{B_{\epsilon}}%
(t):=\frac{B_{\epsilon}(t)}{t}\quad\forall\,t>0,
\]
and let $F_{\epsilon}:[0,\infty)\rightarrow\lbrack0,\infty)$ be the function
given by
\begin{equation}
F_{\epsilon}(t):=\int_{0}^{t}b_{\epsilon}(s)^{2}\mathrm{d}s\quad\forall
\,t\geq0. \label{F}%
\end{equation}

\begin{proposition}
\label{Prop2}One has%
\begin{equation}
\min\left\{  2,p\right\}  B_{\epsilon}(t)\leq tb_{\epsilon}(t)\leq\max\left\{
2,p\right\}  B_{\epsilon}(t)\quad\forall\,t\geq0, \label{Bb}%
\end{equation}%
\begin{equation}
\widehat{B_{\epsilon}}^{-1}(s)\leq C_{p}b_{\epsilon}^{-1}(s)\quad\forall\,s>0,
\label{B^}%
\end{equation}
and%
\begin{equation}
F_{\epsilon}(t)\leq tb_{\epsilon}(t)^{2}\leq K_{p}F_{\epsilon}(t)\quad
\forall\,t\geq0, \label{Fb}%
\end{equation}
where%
\begin{equation}
C_{p}:=\left\{
\begin{array}
[c]{lll}%
2^{\frac{1}{p-1}} & \mathrm{if} & 1<p<2\\
p & \mathrm{if} & p\geq2,
\end{array}
\right.  \label{Cp}%
\end{equation}
and%
\begin{equation}
K_{p}:=\left\{
\begin{array}
[c]{lll}%
3 & \mathrm{if} & 1<p<2\\
2p-1 & \mathrm{if} & p\geq2.
\end{array}
\right.  \label{C'p}%
\end{equation}

\end{proposition}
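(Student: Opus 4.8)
The plan is to treat the three assertions (\ref{Bb}), (\ref{B^}) and (\ref{Fb}) in turn, reusing the logarithmic-derivative bounds (\ref{h2}) and (\ref{h5}) together with the integration-by-parts identity $\int_{0}^{t}sb_{\epsilon}^{\prime}(s)\,\mathrm{d}s=tb_{\epsilon}(t)-B_{\epsilon}(t)$ that already produced (\ref{h6}).

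First, (\ref{Bb}) requires essentially no new work: it is exactly (\ref{h6}), namely $\min\{2,p\}\leq tb_{\epsilon}(t)/B_{\epsilon}(t)\leq\max\{2,p\}$, multiplied through by $B_{\epsilon}(t)\geq0$, with the case $t=0$ holding trivially since both sides vanish.

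Second, for (\ref{B^}) I would compare the two inverse functions pointwise. Dividing (\ref{Bb}) by $t$ gives $b_{\epsilon}(t)/\max\{2,p\}\leq\widehat{B_{\epsilon}}(t)\leq b_{\epsilon}(t)/\min\{2,p\}$; in particular $\widehat{B_{\epsilon}}$ is strictly increasing, since $\widehat{B_{\epsilon}}^{\prime}(t)=(tb_{\epsilon}(t)-B_{\epsilon}(t))/t^{2}>0$ by the identity behind (\ref{h6}), so $\widehat{B_{\epsilon}}^{-1}$ is well defined. Fix $s>0$ and set $t:=\widehat{B_{\epsilon}}^{-1}(s)$, so that $s=\widehat{B_{\epsilon}}(t)\geq b_{\epsilon}(t)/\max\{2,p\}$. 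Since $b_{\epsilon}^{-1}$ is increasing, the desired bound $t\leq C_{p}b_{\epsilon}^{-1}(s)$ is equivalent to $b_{\epsilon}(t/C_{p})\leq s$, and hence it suffices to show $\max\{2,p\}\,b_{\epsilon}(t/C_{p})\leq b_{\epsilon}(t)$. Applying (\ref{h5}) with $c=C_{p}$ and $t$ replaced by $t/C_{p}$ yields $b_{\epsilon}(t)\geq\min\{C_{p},C_{p}^{p-1}\}\,b_{\epsilon}(t/C_{p})$, so the claim reduces to the numerical inequality $\min\{C_{p},C_{p}^{p-1}\}\geq\max\{2,p\}$. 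Here the definition (\ref{Cp}) of $C_{p}$ does exactly the job: for $p\geq2$ one has $C_{p}=p$ and $C_{p}^{p-1}=p^{p-1}\geq p$, so the minimum is $p=\max\{2,p\}$; for $1<p<2$ one has $C_{p}=2^{1/(p-1)}\geq2$ while $C_{p}^{p-1}=2$, so the minimum is $2=\max\{2,p\}$. This pointwise inverse comparison is the only genuinely delicate step, precisely because the exponent in $C_{p}$ is tuned to make $\min\{C_{p},C_{p}^{p-1}\}$ meet $\max\{2,p\}$ exactly.

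Third, for (\ref{Fb}) the left inequality is immediate from monotonicity of $b_{\epsilon}$: since $b_{\epsilon}(s)\leq b_{\epsilon}(t)$ for $0\leq s\leq t$, the definition (\ref{F}) gives $F_{\epsilon}(t)\leq\int_{0}^{t}b_{\epsilon}(t)^{2}\,\mathrm{d}s=tb_{\epsilon}(t)^{2}$. For the right inequality I would run the scheme that produced (\ref{h6}) on the function $g(t):=b_{\epsilon}(t)^{2}$, whose logarithmic derivative $tg^{\prime}(t)/g(t)=2tb_{\epsilon}^{\prime}(t)/b_{\epsilon}(t)$ lies between $2\min\{1,p-1\}$ and $2\max\{1,p-1\}$ by (\ref{h2}). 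Writing $\int_{0}^{t}sg^{\prime}(s)\,\mathrm{d}s=tg(t)-F_{\epsilon}(t)$ and bounding the integrand above by $2\max\{1,p-1\}\,g(s)$, integration yields $tb_{\epsilon}(t)^{2}-F_{\epsilon}(t)\leq2\max\{1,p-1\}\,F_{\epsilon}(t)$, i.e. $tb_{\epsilon}(t)^{2}\leq(1+2\max\{1,p-1\})F_{\epsilon}(t)$. Evaluating this constant in the two regimes—$1+2=3$ when $1<p<2$ and $1+2(p-1)=2p-1$ when $p\geq2$—recovers precisely the value of $K_{p}$ in (\ref{C'p}), completing the proof.
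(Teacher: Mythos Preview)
Your argument is correct and follows essentially the same route as the paper: (\ref{Bb}) is the case $c=1$ of (\ref{Bbcp})/(\ref{h6}); (\ref{B^}) is reduced to the numerical check $\min\{C_{p},C_{p}^{p-1}\}\geq\max\{2,p\}$ via (\ref{h5}) and (\ref{Bb}), which is exactly the paper's verification that $M(C_{p}^{-1},p)\leq1$ unpacked; and (\ref{Fb}) is obtained by the same integration-by-parts scheme using $tb_{\epsilon}^{\prime}(t)\leq\max\{1,p-1\}b_{\epsilon}(t)$. The only cosmetic difference is that the paper packages the step for (\ref{B^}) through the combined estimate (\ref{Bbcp}) with $c=C_{p}^{-1}$, whereas you separate it into the two ingredients (\ref{Bb}) and (\ref{h5}).
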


\begin{proof}
The estimates in (\ref{Bb}) come directly from (\ref{Bbcp}) with $c=1.$
Inequality (\ref{B^}) is equivalent to%
\begin{equation}
\frac{tb_{\epsilon}(t/C_{p})}{B_{\epsilon}(t)}\leq1\quad\forall\,t>0,
\label{B^1}%
\end{equation}
($t=C_{p}b_{\epsilon}^{-1}(s)$). The second inequality in (\ref{Bbcp}), with
$c=C_{p}^{-1},$ yields%
\begin{equation}
\frac{tb_{\epsilon}(t/C_{p})}{B_{\epsilon}(t)}\leq M(C_{p}^{-1},p).
\label{B^2}%
\end{equation}

If $1<p<2$ we have%
\[
M(C_{p}^{-1},p)\leq\max\left\{  2C_{p}^{-1},2C_{p}^{-(p-1)}\right\}
=\max\left\{  2^{\frac{p-2}{p-1}},1\right\}  =1
\]
and if $p\geq2$ we have%
\[
M(C_{p}^{-1},p)\leq\max\left\{  pC_{p}^{-1},pC_{p}^{-(p-1)}\right\}
=\max\left\{  1,p^{2-p}\right\}  =1.
\]

In both cases we obtain (\ref{B^1}) from (\ref{B^2}), so that (\ref{B^}) is proved.

The first inequality in (\ref{Fb}) follows from the fact that $b_{\epsilon
}^{2}$ is increasing. In order to prove the second one we note from (\ref{h2})
that%
\[
tb_{\epsilon}^{\prime}(t)\leq\max\left\{  1,p-1\right\}  b_{\epsilon}%
(t)\quad\forall\,t>0.
\]
Hence, integration by parts yields%
\begin{align*}
F_{\epsilon}(t)  &  =tb_{\epsilon}(t)^{2}-\int_{0}^{t}2sb_{\epsilon
}(s)b_{\epsilon}^{\prime}(s)\mathrm{d}s\\
&  \geq tb_{\epsilon}(t)^{2}-2\max\left\{  1,p-1\right\}  \int_{0}%
^{t}b_{\epsilon}^{2}(s)\mathrm{d}s\\
&  =tb_{\epsilon}(t)^{2}-2\max\left\{  1,p-1\right\}  F_{\epsilon}%
(t),\quad\forall\,t>0.
\end{align*}

Consequently,%
\[
\max\left\{  3,2p-1\right\}  F_{\epsilon}(t)\geq tb_{\epsilon}(t)^{2}.
\]

\end{proof}

\begin{remark}
\label{aeps}The proofs presented in this section enable us to write some
constants and estimates in terms of $i_{a_{\epsilon}}$ and $s_{a_{\epsilon}}$
for an arbitrary regularization $a_{\epsilon}$ of $a$ (not necessarily related
to the $p$-Laplacian) satisfying the structural conditions (\ref{iasa}). Thus,
we have that:

\begin{itemize}
\item $\min\left\{  c^{1+i_{a_{\epsilon}}},c^{1+s_{a_{\epsilon}}}\right\}
(2+i_{a_{\epsilon}})\leq\frac{tb_{\epsilon}(ct)}{B_{\epsilon}(t)}%
\leq(2+s_{a_{\epsilon}})\max\left\{  c^{1+i_{a_{\epsilon}}}%
,c^{1+s_{a_{\epsilon}}}\right\}  $ in (\ref{Bbcp}),

\item $C_{\epsilon}:=(2+s_{a_{\epsilon}})^{1/(1+i_{a_{\epsilon}})}$ is the
constant equivalent to $C_{p}$ in (\ref{Cp}),

\item $K_{\epsilon}:=3+2s_{a_{\epsilon}}$ is the constant equivalent to
$K_{p}$ in (\ref{C'p}).
\end{itemize}
\end{remark}

\section{Proofs\label{sec2}}

Let $\left(  \mathcal{R},\mathfrak{m}\right)  $ be a finite, positive measure
space and let $l>1.$ We recall that the Lorentz space $L^{l,1}(\mathcal{R})$
consists of all measurable functions $v:\mathcal{R}\rightarrow\mathbb{R}$ such
that%
\[
\int_{0}^{\mathfrak{m}(\mathcal{R})}\tau^{-1/l^{\prime}}\left\vert v^{\ast
}(\tau)\right\vert \mathrm{d}\tau<\infty.
\]
Here, $l^{\prime}:=\frac{l}{l-1}$ and $v^{\ast}:[0,\infty)\rightarrow
\lbrack0,\infty]$ stands for the decreasing rearrangement of $v$, which is
defined as%
\[
v^{\ast}(s):=\left\{
\begin{array}
[c]{lll}%
\sup\left\{  t\geq0:\mu_{v}(t)>s\right\}  & \text{if} & 0\leq s\leq
\mathfrak{m}(\mathcal{R})\\
0 & \text{if} & s>\mathfrak{m}(\mathcal{R}),
\end{array}
\right.
\]
where
\[
\mu_{v}(t):=\mathfrak{m}\left(  \left\{  x\in\mathcal{R}:v(x)>t\right\}
\right)  ,\quad t\geq0,
\]
is the distribution function of $v.$

As it is well known, $L^{l,1}(\mathcal{R})$ is a Banach space endowed with the
norm
\[
\left\Vert v\right\Vert _{l,1}:=\int_{0}^{\mathfrak{m}(\mathcal{R})}\left\vert
v^{\ast\ast}(\tau)\right\vert \tau^{-1/l^{\prime}}\mathrm{d}\tau
\]
where $v^{\ast\ast}:(0,\infty)\rightarrow\lbrack0,\infty)$ is defined as%
\[
v^{\ast\ast}(s):=\frac{1}{s}\int_{0}^{s}v^{\ast}(r)\mathrm{d}r,\quad s>0.
\]

Let $\Omega$ be a bounded domain of $\mathbb{R}^{N},$ $N\geq2,$ and let $f\in
L^{N,1}(\Omega).$ For each $\epsilon>0$, let us consider the boundary value
problems (\ref{na}) and (\ref{da}) with the particular function $a=a_{\epsilon
}$ defined in (\ref{a}):
\begin{equation}
\left\{
\begin{array}
[c]{lll}%
-\operatorname{div}\left(  \left\vert (\nabla u\right\vert ^{2}+\epsilon
)^{\frac{p-2}{2}}\nabla u\right)  =f(x) & \mathrm{in} & \Omega\\
\dfrac{\partial u}{\partial\nu}=0 & \mathrm{on} & \partial\Omega,
\end{array}
\right.  \label{nae}%
\end{equation}
and%
\begin{equation}
\left\{
\begin{array}
[c]{lll}%
-\operatorname{div}\left(  \left\vert (\nabla u\right\vert ^{2}+\epsilon
)^{\frac{p-2}{2}}\nabla u\right)  =f(x) & \mathrm{in} & \Omega\\
u=0 & \mathrm{on} & \partial\Omega.
\end{array}
\right.  \label{dae}%
\end{equation}

If $\partial\Omega$ is at least Lipschitz, then existence and uniqueness of a
weak solution to (\ref{nae}) in $W_{\bot}^{1,p}(\Omega):=\left\{  u\in
W^{1,p}(\Omega):\int_{\Omega}u(x)\mathrm{d}x=0\right\}  $ and to (\ref{dae})
in $W_{0}^{1,p}(\Omega)$ are well known facts. As for (\ref{nae}) it is
assumed that $f$ fulfills the compatibility condition
\begin{equation}
\int_{\Omega}f(x)\mathrm{d}x=0. \label{f0}%
\end{equation}

The next statement is the reproduction of Theorem 2.14 of \cite{CM} taking
into account the definition of $\psi_{\epsilon}$ in (\ref{psi}) and the
estimate (\ref{B^}).

\begin{lemma}
\label{2.14} Let $u_{\epsilon}$ denote either the weak solution to the Neumann
problem (\ref{nae}) in $W_{\bot}^{1,p}(\Omega)$ or the solution to the
Dirichlet problem (\ref{dae}) in $W_{0}^{1,p}(\Omega)$. Then%
\begin{equation}
\int_{\Omega}B_{\epsilon}(\left\vert \nabla u_{\epsilon}\right\vert
)\mathrm{d}x\leq C^{\prime\prime}C_{p}\psi_{\epsilon}(\left\Vert f\right\Vert
_{N,1}) \label{aux1}%
\end{equation}
where $C_{p}$ is defined by (\ref{Cp}) and $C^{\prime\prime}$ is a constant
that depends at most on $N$ and $\Omega.$
\end{lemma}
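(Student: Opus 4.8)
The plan is to read off (\ref{aux1}) from Theorem 2.14 of \cite{CM} applied to the regularized operator $a_{\epsilon}$, and then to isolate all the $p$-dependence of the resulting energy bound in the explicit factor $C_{p}$ by invoking (\ref{B^}). First I would confirm that $a_{\epsilon}$ is an admissible nonlinearity for that theorem: this requires only the smoothness recorded in (\ref{cinf}) and the structural bounds (\ref{iasae}), which were already verified for $a_{\epsilon}$ in (\ref{iae})--(\ref{sae}). The remark following (\ref{aconv}) further guarantees that the conclusions of Lemma 3.3 of \cite{CM} are available for $a_{\epsilon}$, $b_{\epsilon}$ and $B_{\epsilon}$, so that the hypotheses of Theorem 2.14 of \cite{CM} are met for the weak solution $u_{\epsilon}$ of either (\ref{nae}) or (\ref{dae}).

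Granting this, Theorem 2.14 of \cite{CM} supplies a bound of the form
\[
\int_{\Omega}B_{\epsilon}(\left\vert \nabla u_{\epsilon}\right\vert )\mathrm{d}x\leq C^{\prime\prime}\left\Vert f\right\Vert _{N,1}\,\widehat{B_{\epsilon}}^{-1}(\left\Vert f\right\Vert _{N,1}),
\]
in which $C^{\prime\prime}$ is the constant produced by the rearrangement and isoperimetric estimates of \cite{CM}. The remaining step is purely algebraic: multiplying (\ref{B^}) by $s>0$ and using the definition (\ref{psi}) of $\psi_{\epsilon}$ gives
\[
s\,\widehat{B_{\epsilon}}^{-1}(s)\leq C_{p}\,s\,b_{\epsilon}^{-1}(s)=C_{p}\,\psi_{\epsilon}(s)\quad\forall\,s>0 .
\]
Choosing $s=\left\Vert f\right\Vert _{N,1}$ and substituting into the previous display yields exactly (\ref{aux1}).

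The point requiring the most care is not any of these computations but the claim that $C^{\prime\prime}$ depends at most on $N$ and $\Omega$, and in particular is independent of $p$ (equivalently of $\epsilon$). The whole content of the lemma is that, after the reduction above, every trace of the $p$-dependence in the energy estimate has been collected into the explicit factor $C_{p}$ of (\ref{Cp}). To justify this I would inspect the proof of Theorem 2.14 in \cite{CM} and check that $a_{\epsilon}$ enters there only through the functions $B_{\epsilon}$, $b_{\epsilon}$ and $\widehat{B_{\epsilon}}$ already treated, while the numerical constant arises solely from the Sobolev and isoperimetric inequalities on $\Omega$ and hence depends at most on $N$ and $\Omega$.
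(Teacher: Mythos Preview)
Your proposal is correct and matches the paper's approach exactly: the paper does not give a separate proof of this lemma but simply states (just before the lemma) that it is ``the reproduction of Theorem 2.14 of \cite{CM} taking into account the definition of $\psi_{\epsilon}$ in (\ref{psi}) and the estimate (\ref{B^}).'' Your write-up spells out precisely those two steps---the energy bound from \cite{CM} in terms of $\widehat{B_{\epsilon}}^{-1}$ and the conversion to $C_{p}\psi_{\epsilon}$ via (\ref{B^})---and correctly flags that the only nontrivial point is the $p$-independence of $C^{\prime\prime}$, which must be read off from the proof in \cite{CM}.
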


From now on, $\left\vert \Omega\right\vert $ will denote the $N$-dimensional
Lebesgue measure of $\Omega$ and $\left\vert \partial\Omega\right\vert $ will
denote the $(N-1)$-dimensional Haursdorff measure of $\partial\Omega.$

\subsection{Proofs of Theorems \ref{main} and \ref{main2}\label{sec3.1}}

In this subsection we fix $N\geq3$ and $\theta>N-1.$ Let us assume for a while
that%
\begin{equation}
\partial\Omega\in C^{\infty} \label{regb}%
\end{equation}
and%
\begin{equation}
f\in C_{c}^{\infty}(\Omega). \label{regf}%
\end{equation}
To simplify the notation we drop the subscript $\epsilon$ of $u_{\epsilon}$.

According to \cite{CM}, the assumptions (\ref{regb})-(\ref{regf}) guarantee
that $u\in C^{3}(\overline{\Omega})$ and in addition
\begin{equation}
\Delta u\frac{\partial u}{\partial\nu}-\sum_{i,j}u_{x_{i}x_{j}}u_{x_{i}}%
\nu_{j}=-\mathcal{B}\left(  \nabla_{T}u,\nabla_{T}u\right)  \text{ \ on
}\partial\Omega\label{dtrace1}%
\end{equation}
if $u$ is the weak solution to (\ref{nae}) and
\begin{equation}
\Delta u\frac{\partial u}{\partial\nu}-\sum_{i,j}u_{x_{i}x_{j}}u_{x_{i}}%
\nu_{j}=-\operatorname{tr}\mathcal{B}\left(  \frac{\partial u}{\partial\nu
}\right)  ^{2}\text{ \ on }\partial\Omega\label{dtrace}%
\end{equation}
if $u$ is the weak solution to (\ref{dae}). Here, $\nu_{j}$ denotes the $j$th
component of the normal vector $\nu$ to $\partial\left\{  \left\vert \nabla
u\right\vert >t\right\}  ,$ $\mathcal{B}$ denotes the second fundamental form
of $\partial\Omega,$ $\operatorname{tr}\mathcal{B}$ denotes the trace of
$\mathcal{B}.$ In (\ref{dtrace1}) the symbol $\nabla_{T}$ stands for the
gradient operator on $\partial\Omega.$

As argued in \cite{CM},
\begin{equation}
\mathcal{B}\left(  \nabla_{T}u,\nabla_{T}u\right)  \leq k(x)\left\vert
\nabla_{T}u\right\vert ^{2}\text{ \ on }\partial\Omega\label{tr1}%
\end{equation}
if $u$ is the weak solution to (\ref{nae}) and
\begin{equation}
\operatorname{tr}\mathcal{B}\left(  \frac{\partial u}{\partial\nu}\right)
^{2}\leq k(x)\left\vert \nabla u\right\vert ^{2}\text{ \ on }\partial
\Omega\label{tr}%
\end{equation}
if $u$ is the weak solution to (\ref{dae}). In both inequalities $k\in
L^{N-1,1}(\partial\Omega)$ is a nonnegative function that is pointwise
estimated, up to a multiplicative constant depending on $\partial\Omega,$ by
the second-order derivatives of the $(N-1)$-dimensional functions which
locally represent $\partial\Omega.$

In the sequel $\phi:(0,\left\vert \Omega\right\vert )\rightarrow
\lbrack0,\infty)$ is the function defined by
\[
\phi(s):=\left(  \frac{d}{ds}\int_{\left\{  \left\vert \nabla u\right\vert
>\left\vert \nabla u\right\vert ^{\ast}(s)\right\}  }f^{2}\mathrm{d}x\right)
^{2}\text{ \ for a.e. }s\in(0,\left\vert \Omega\right\vert )
\]
and $\mu$ the distribution function of $\left\vert \nabla u\right\vert .$

For each $t_{0}\in\left[  \left\vert \nabla u\right\vert ^{\ast}%
(\Omega/2),\left\Vert \nabla u\right\Vert _{\infty}\right]  $ we can proceed
as in \cite{CM}, using (\ref{tr1}) in (\ref{dtrace1}) and (\ref{tr}) in
(\ref{dtrace}), to arrive at the inequality
\begin{equation}%
\begin{array}
[c]{lll}%
2\xi_{p}F_{\epsilon}(\left\vert \nabla u\right\vert ^{\ast}(s)) & \leq &
2\xi_{p}F_{\epsilon}(t_{0})+C_{\Omega}\left\Vert \nabla u\right\Vert _{\infty
}b_{\epsilon}(\left\Vert \nabla u\right\Vert _{\infty})%
{\displaystyle\int_{s}^{\mu(t_{0})}}
r^{-1/N^{\prime}}\phi(r)\mathrm{d}r\\
&  & +\dfrac{C_{\Omega}}{\xi_{p}}\left\Vert \nabla u\right\Vert _{\infty}%
{\displaystyle\int_{s}^{\mu(t_{0})}}
r^{-2/N^{\prime}}%
{\displaystyle\int_{0}^{r}}
f^{\ast}(\rho)^{2}\mathrm{d}\rho\mathrm{d}r\\
&  & +C\left\Vert \nabla u\right\Vert _{\infty}b_{\epsilon}(\left\Vert \nabla
u\right\Vert _{\infty})^{2}%
{\displaystyle\int_{s}^{\mu(t_{0})}}
k^{\ast\ast}(c_{\Omega}r^{\frac{1}{N^{\prime}}})r^{-\frac{1}{N^{\prime}}%
}\mathrm{d}r
\end{array}
\label{aux0}%
\end{equation}
valid for every $s\in\lbrack0,\mu(t_{0})).$ Here, $b_{\epsilon}$ and
$F_{\epsilon}$ are the functions defined in (\ref{b}) and (\ref{F}), respectively,%

\begin{equation}
\xi_{p}:=\frac{1+\min\left\{  i_{a_{\epsilon}},0\right\}  }{2}=\frac
{\min\left\{  p-1,1\right\}  }{2}, \label{qsi}%
\end{equation}
$C_{\Omega}$ and $c_{\Omega}$ are positive constants depending at most on
$\Omega$ (but not on $p$).

\begin{remark}
\label{rem1}As observed in \cite{CM}, if $\Omega$ is convex then
$\mathcal{B}\left(  \nabla_{T}u,\nabla_{T}u\right)  \leq0$ and
$\operatorname{tr}\mathcal{B}\leq0$ so that the right-hand sides of
(\ref{tr1}) and (\ref{tr}) can be replaced by $0.$ Therefore, the latter term
at the right-hand side of (\ref{aux0}) can be disregarded.
\end{remark}

As $\left\vert \nabla u\right\vert ^{\ast}(0)=\left\Vert \nabla u\right\Vert
_{\infty},$ taking $s=0$ in (\ref{aux0}) and using (\ref{Fb}) we obtain%

\begin{equation}%
\begin{array}
[c]{lll}%
2\xi_{p}F_{\epsilon}(\left\Vert \nabla u\right\Vert _{\infty}) & \leq &
2\xi_{p}F_{\epsilon}(t_{0})+C_{\Omega}b_{\epsilon}(\left\Vert \nabla
u\right\Vert _{\infty})\left\Vert \nabla u\right\Vert _{\infty}%
{\displaystyle\int_{0}^{\left\vert \Omega\right\vert }}
r^{-1/N^{\prime}}\phi(r)\mathrm{d}r\\
&  & +\dfrac{C_{\Omega}}{\xi_{p}}\left\Vert \nabla u\right\Vert _{\infty}%
{\displaystyle\int_{0}^{\left\vert \Omega\right\vert }}
r^{-2/N^{\prime}}%
{\displaystyle\int_{0}^{r}}
f^{\ast}(\rho)^{2}\mathrm{d}\rho\mathrm{d}r\\
&  & +K_{p}F_{\epsilon}(\left\Vert \nabla u\right\Vert _{\infty})%
{\displaystyle\int_{0}^{\mu(t_{0})}}
k^{\ast\ast}(c_{\Omega}r^{\frac{1}{N^{\prime}}})r^{-\frac{1}{N^{\prime}}%
}\mathrm{d}r
\end{array}
\label{aux2}%
\end{equation}
where $K_{p}$ is defined in (\ref{C'p}). Note that we also have used that%
\[%
{\displaystyle\int_{s}^{\mu(t_{0})}}
r^{-1/N^{\prime}}\phi(r)\mathrm{d}r\leq%
{\displaystyle\int_{0}^{\left\vert \Omega\right\vert }}
r^{-1/N^{\prime}}\phi(r)\mathrm{d}r
\]
and%
\[%
{\displaystyle\int_{s}^{\mu(t_{0})}}
r^{-2/N^{\prime}}%
{\displaystyle\int_{0}^{r}}
f^{\ast}(\rho)^{2}\mathrm{d}\rho\mathrm{d}r\leq%
{\displaystyle\int_{0}^{\left\vert \Omega\right\vert }}
r^{-2/N^{\prime}}%
{\displaystyle\int_{0}^{r}}
f^{\ast}(\rho)^{2}\mathrm{d}\rho\mathrm{d}r.
\]

Owing to the form of the function $\phi^{2}$ one has (see \cite[Proposition
3.4]{CM})%

\begin{equation}
\int_{0}^{s}\phi^{\ast}(r)^{2}\mathrm{d}r\leq\int_{0}^{s}f^{\ast}%
(r)^{2}\mathrm{d}r\quad\forall\,s\in(0,\left\vert \Omega\right\vert ).
\label{phif}%
\end{equation}

Taking into account that $N>2,$ the inequality (\ref{phif}) implies that (see
\cite[Lemma 3.5]{CM})
\begin{equation}
\int_{0}^{\left\vert \Omega\right\vert }r^{-1/N^{\prime}}\phi(r)\mathrm{d}%
r\leq C_{N}\left\Vert f\right\Vert _{N,1} \label{fa}%
\end{equation}
for some positive constant $C_{N}$ depending only on $N.$ The assumption $N>2$
also implies that (see \cite[Lemma 3.6]{CM})
\begin{equation}
\int_{0}^{\left\vert \Omega\right\vert }r^{-2/N^{\prime}}%
{\displaystyle\int_{0}^{r}}
f^{\ast}(\rho)^{2}\mathrm{d}\rho\mathrm{d}r\leq\widetilde{C}_{N}\left\Vert
f\right\Vert _{N,1}^{2}, \label{fb}%
\end{equation}
for some positive constant $\widetilde{C}_{N}$ depending only on $N.$

\begin{remark}
\label{n2}The hypothesis $N\geq3$ is used to deduce (\ref{fa}) and (\ref{fb}).
In order to treat the case $N=2$ in the next subsection we will assume the
slightly stronger hypothesis: $f\in L^{q}(\Omega)$ for some $q>2.$
\end{remark}

Using (\ref{fa}) and (\ref{fb}) in (\ref{aux2}) we obtain
\begin{equation}%
\begin{array}
[c]{lll}%
2\xi_{p}F_{\epsilon}(\left\Vert \nabla u\right\Vert _{\infty}) & \leq &
2\xi_{p}F_{\epsilon}(t_{0})+C_{\Omega}C_{N}\left\Vert \nabla u\right\Vert
_{\infty}b_{\epsilon}(\left\Vert \nabla u\right\Vert _{\infty})\left\Vert
f\right\Vert _{N,1}\\
&  & +\dfrac{1}{\xi_{p}}C_{\Omega}\widetilde{C}_{N}\left\Vert \nabla
u\right\Vert _{\infty}\left\Vert f\right\Vert _{N,1}^{2}\\
&  & +F_{\epsilon}(\left\Vert \nabla u\right\Vert _{\infty})K_{p}%
{\displaystyle\int_{0}^{\mu(t_{0})}}
k^{\ast\ast}(c_{\Omega}r^{\frac{1}{N^{\prime}}})r^{-\frac{1}{N^{\prime}}%
}\mathrm{d}r
\end{array}
\label{aux3}%
\end{equation}
whenever $t_{0}\in\left[  \left\vert \nabla u\right\vert ^{\ast}%
(\Omega/2),\left\Vert \nabla u\right\Vert _{\infty}\right]  .$

Let $G:[0,\infty)\rightarrow\lbrack0,\infty)$ be the function%
\[
G(s):=%
{\displaystyle\int_{0}^{s}}
k^{\ast\ast}(c_{\Omega}r^{\frac{1}{N^{\prime}}})r^{-\frac{1}{N^{\prime}}%
}\mathrm{d}r=N^{\prime}(c_{\Omega})^{1-N^{\prime}}%
{\displaystyle\int_{0}^{c_{\Omega}s^{1/N^{\prime}}}}
k^{\ast\ast}(\tau)\tau^{-\frac{1}{(N-1)^{\prime}}}\mathrm{d}\tau.
\]

If $c_{\Omega}s^{1/N^{\prime}}\leq\left\vert \partial\Omega\right\vert $ then,
as $\theta>N-1$ and
\[
\left\Vert k\right\Vert _{\theta,1}=%
{\displaystyle\int_{0}^{\left\vert \partial\Omega\right\vert }}
k^{\ast\ast}(\tau)\tau^{-\frac{1}{\theta^{\prime}}}\mathrm{d}\tau,
\]
one has%
\begin{align*}
G(s)  &  =N^{\prime}(c_{\Omega})^{1-N^{\prime}}%
{\displaystyle\int_{0}^{c_{\Omega}s^{1/N^{\prime}}}}
k^{\ast\ast}(\tau)\tau^{\frac{1}{\theta^{\prime}}-\frac{1}{(N-1)^{\prime}}%
}\tau^{-\frac{1}{\theta^{\prime}}}\mathrm{d}\tau\\
&  \leq N^{\prime}(c_{\Omega})^{1-N^{\prime}}(c_{\Omega}s^{1/N^{\prime}%
})^{\frac{1}{\theta^{\prime}}-\frac{1}{(N-1)^{\prime}}}%
{\displaystyle\int_{0}^{c_{\Omega}s^{1/N^{\prime}}}}
k^{\ast\ast}(\tau)\tau^{-\frac{1}{\theta^{\prime}}}\mathrm{d}\tau\\
&  \leq(c_{\Omega})^{-1/\theta}N^{\prime}\left\Vert k\right\Vert _{\theta
,1}s^{\frac{\theta-(N-1)}{\theta N}}.
\end{align*}

Let $\overline{s}_{p}$ be defined by the equation%
\[
(c_{\Omega})^{-1/\theta}N^{\prime}\left\Vert k\right\Vert _{\theta
,1}(\overline{s}_{p})^{\frac{\theta-(N-1)}{\theta N}}=\frac{\xi_{p}}{K_{p}}%
\]
so that
\begin{equation}
\overline{s}_{p}:=\left(  \frac{(c_{\Omega})^{1/\theta}}{N^{\prime}\left\Vert
k\right\Vert _{\theta,1}}\frac{\xi_{p}}{K_{p}}\right)  ^{\frac{\theta
N}{\theta-(N-1)}}. \label{sbar}%
\end{equation}

Let us set%
\begin{equation}
C_{N,\Omega}:=\min\left\{  \left(  \frac{\left\vert \partial\Omega\right\vert
}{c_{\Omega}}\right)  ^{N^{\prime}}\;,\;\frac{\left\vert \Omega\right\vert
}{2}\;,\;C^{\prime\prime}\right\}  \label{CNO}%
\end{equation}
and then choose%

\begin{equation}
s_{p}:=\min\left\{  C_{N,\Omega}\text{ },\text{ }\overline{s}_{p}\right\}
\label{s0}%
\end{equation}
and
\[
t_{p}:=\left\vert \nabla u\right\vert ^{\ast}(s_{p}).
\]

Of course, $t_{p}\in\left[  \left\vert \nabla u\right\vert ^{\ast}(\left\vert
\Omega\right\vert /2),\left\Vert \nabla u\right\Vert _{\infty}\right]  .$
Moreover, as $\left\vert \nabla u\right\vert $ and $\left\vert \nabla
u\right\vert ^{\ast}$ are equidistributed we have that $\mu(t_{p})\leq s_{p}$
and this implies that%
\[
G(\mu(t_{p}))\leq G(s_{p})\leq G(\overline{s}_{p})\leq(c_{\Omega})^{-1/\theta
}N^{\prime}\left\Vert k\right\Vert _{\theta,1}(\overline{s}_{p})^{\frac
{\theta-(N-1)}{\theta N}}=\frac{\xi_{p}}{K_{p}}.
\]
Consequently,%
\[
K_{p}%
{\displaystyle\int_{0}^{\mu(t_{p})}}
k^{\ast\ast}(c_{\Omega}r^{\frac{1}{N^{\prime}}})r^{-\frac{1}{N^{\prime}}%
}\mathrm{d}r=K_{p}G(\mu(t_{p}))\leq\xi_{p}.
\]

This estimate in (\ref{aux3}), with $t_{0}=t_{p},$ yields%
\begin{equation}
F_{\epsilon}(\left\Vert \nabla u\right\Vert _{\infty})\leq CF_{\epsilon}%
(t_{p})+\dfrac{C}{\xi_{p}}b_{\epsilon}(\left\Vert \nabla u\right\Vert
_{\infty})\left\Vert \nabla u\right\Vert _{\infty}\left\Vert f\right\Vert
_{N,1}+\dfrac{C}{(\xi_{p})^{2}}\left\Vert \nabla u\right\Vert _{\infty
}\left\Vert f\right\Vert _{N,1}^{2}, \label{aux4}%
\end{equation}
for a positive constant $C$ sufficiently large, that depends at most on $N$
and $\Omega$ (but not on $p$).

Hence, noticing from (\ref{Fb}) that
\[
(K_{p})^{-1}\left\Vert \nabla u\right\Vert _{\infty}b_{\epsilon}(\left\Vert
\nabla u\right\Vert _{\infty})^{2}\leq F_{\epsilon}(\left\Vert \nabla
u\right\Vert _{\infty})
\]
and
\[
F_{\epsilon}(t_{p})\leq t_{p}b_{\epsilon}(t_{p})^{2}\leq\left\Vert \nabla
u\right\Vert _{\infty}b_{\epsilon}(t_{p})^{2}%
\]
we obtain from (\ref{aux4}) the inequality (after canceling $\left\Vert \nabla
u\right\Vert _{\infty}$)$.$
\begin{equation}
b_{\epsilon}(\left\Vert \nabla u\right\Vert _{\infty})^{2}\leq CK_{p}%
b_{\epsilon}(t_{p})^{2}+CK_{p}b_{\epsilon}(\left\Vert \nabla u\right\Vert
_{\infty})\frac{\left\Vert f\right\Vert _{N,1}}{\xi_{p}}+CK_{p}\left(
\frac{\left\Vert f\right\Vert _{N,1}}{\xi_{p}}\right)  ^{2} \label{aux5}%
\end{equation}
for a positive constant $C$ sufficiently large, that depends at most on $N$
and $\Omega$ (but not on $p$).

The following lemma is elementary.

\begin{lemma}
\label{square}If $X,$ $x,$ $Y$ and $c$ are positive numbers satisfying%
\[
X^{2}\leq cx^{2}+cYX+cY^{2},
\]
then
\[
X\leq\sqrt{c}x+(c+1)Y.
\]

\end{lemma}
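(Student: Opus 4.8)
The plan is to read the hypothesis as a quadratic inequality in the unknown $X$ and to bound $X$ by the larger root of the associated quadratic. Setting $g(X) := X^2 - cYX - cx^2 - cY^2$, the hypothesis is precisely $g(X)\le 0$. Since $g$ is an upward-opening parabola with vertex at $X = cY/2$, and its discriminant $c^2Y^2 + 4c(x^2+Y^2)$ is strictly positive, $g$ has two real roots $r_- < r_+$, and $g(X)\le 0$ forces $r_- \le X \le r_+$; in particular $X \le r_+$. Hence it suffices to place the candidate value $Z := \sqrt{c}\,x + (c+1)Y$ to the right of the vertex and to verify $g(Z) \ge 0$: monotonicity of $g$ on $[cY/2,\infty)$ then gives $r_+ \le Z$, whence $X \le r_+ \le Z$, which is exactly the desired conclusion.

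The first step is the trivial observation $Z \ge (c+1)Y \ge cY/2$, which puts $Z$ to the right of the vertex. The second and decisive step is a direct evaluation of $g(Z)$. Expanding $Z^2 = cx^2 + 2\sqrt{c}\,(c+1)xY + (c+1)^2 Y^2$ and $cYZ = c^{3/2}xY + c(c+1)Y^2$ and collecting terms, the $x^2$ contributions cancel, the coefficient of $xY$ simplifies to $\sqrt{c}\,(c+2)$, and the coefficient of $Y^2$ collapses to $(c+1)^2 - c(c+1) - c = 1$. I therefore expect the clean identity $g(Z) = \sqrt{c}\,(c+2)\,xY + Y^2$, which is strictly positive because $x$ and $Y$ are positive, completing the argument.

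The only mild subtlety — which I would flag rather than call a genuine obstacle — is the fit of the exact constants $\sqrt{c}$ and $(c+1)$ in the conclusion. A naive application of the subadditivity $\sqrt{a+b}\le\sqrt{a}+\sqrt{b}$ inside the quadratic formula for $r_+$ produces the coefficient $(c+\sqrt{c})$ in front of $Y$, which is not uniformly comparable to $(c+1)$ across all $c>0$. This is why I prefer to verify $g(Z)\ge 0$ directly: the substitution is a short, self-contained computation that pins down precisely the constants asserted in the statement with no loss. Beyond this bookkeeping there is no hidden analytic difficulty, and the lemma is indeed elementary once the quadratic structure is made explicit.
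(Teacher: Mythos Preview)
Your proof is correct and follows essentially the same approach as the paper: both treat the hypothesis as a quadratic inequality in $X$ and exploit the vertex at $cY/2$. The paper completes the square and handles the two cases $X\gtrless cY/2$ separately, whereas you verify directly that $g(Z)=\sqrt{c}(c+2)xY+Y^{2}>0$ on the increasing branch, which is a slightly cleaner packaging of the same computation.
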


\begin{proof}
One has%
\begin{align*}
\left(  X-\frac{cY}{2}\right)  ^{2}  &  \leq cx^{2}+cY^{2}+\left(  \frac
{cY}{2}\right)  ^{2}\\
&  \leq cx^{2}+(\frac{c}{2}+1)^{2}Y^{2}\leq\left(  \sqrt{c}x+(\frac{c}%
{2}+1)Y\right)  ^{2},
\end{align*}
so that%
\[
\left\vert X-\frac{cY}{2}\right\vert \leq\sqrt{c}x+(\frac{c}{2}+1)Y.
\]

Hence, if $X-\dfrac{cY}{2}\geq0,$ then%
\[
X\leq\sqrt{c}x+(\frac{c}{2}+1+\frac{c}{2})Y=\sqrt{c}x+(c+1)Y
\]
and if $X-\dfrac{cY}{2}<0,$ then%
\[
X<\dfrac{cY}{2}\leq\sqrt{c}x+(c+1)Y.
\]

\end{proof}

Lemma \ref{square} allows us to deduce from (\ref{qsi}) and (\ref{aux5}) that%
\begin{equation}
b_{\epsilon}(\left\Vert \nabla u\right\Vert _{\infty})\leq\sqrt{CK_{p}%
}b_{\epsilon}(t_{p})+\frac{2\left(  CK_{p}+1\right)  }{\min\left\{
p-1,1\right\}  }\left\Vert f\right\Vert _{N,1} \label{aux8}%
\end{equation}
where $K_{p}$ is defined by (\ref{C'p}), and $C$ is a positive constant
depending at most on $\Omega$ and $N$.

Now, let $\beta_{\epsilon}:[0,\infty)\rightarrow\lbrack0,\infty)$ be the
function defined by $\beta_{\epsilon}(t):=tb_{\epsilon}(t)$ and recall the
function $\psi_{\epsilon}$ defined in (\ref{psi}). It follows from (\ref{Bb})
and Lemma \ref{2.14} that%
\begin{align*}
\int_{\Omega}\beta_{\epsilon}(\left\vert \nabla u\right\vert )\mathrm{d}x  &
=\int_{\Omega}\left\vert \nabla u\right\vert b_{\epsilon}(\left\vert \nabla
u\right\vert )\mathrm{d}x\\
&  \leq\max\left\{  2,p\right\}  \int_{\Omega}B_{\epsilon}(\left\vert \nabla
u\right\vert )\mathrm{d}x\leq C^{\prime\prime}S_{1}\psi_{\epsilon}(\left\Vert
f\right\Vert _{N,1})
\end{align*}
where%
\begin{equation}
S_{1}:=\max\left\{  2,p\right\}  C_{p}=\left\{
\begin{array}
[c]{lll}%
2^{\frac{p}{p-1}} & \mathrm{if} & 1<p<2\\
p^{2} & \mathrm{if} & p\geq2.
\end{array}
\right.  \label{S1}%
\end{equation}
Hence, as
\[
\int_{\Omega}\beta_{\epsilon}(\left\vert \nabla u\right\vert )\mathrm{d}%
x\geq\int_{\left\{  \left\vert \nabla u\right\vert \geq t_{p}\right\}  }%
\beta_{\epsilon}(\left\vert \nabla u\right\vert )\mathrm{d}x\geq
\beta_{\epsilon}(t_{p})\lim_{t\rightarrow t_{p}^{-}}\mu(t)\geq\beta_{\epsilon
}(t_{p})s_{p}%
\]
we have that%
\[
\beta_{\epsilon}(t_{p})\leq S_{2}\psi_{\epsilon}(\left\Vert f\right\Vert
_{N,1})
\]
where%
\begin{equation}
S_{2}:=\frac{C^{\prime\prime}S_{1}}{s_{p}}. \label{S2}%
\end{equation}

Thus, after noticing from (\ref{s0}) that $\frac{C^{\prime\prime}}{s_{p}}%
\geq1$ and $S_{1}\geq1,$ we obtain from Lemma \ref{Lem1} the estimate
\begin{equation}
\beta_{\epsilon}(t_{p})\leq\psi_{\epsilon}(S_{2}\left\Vert f\right\Vert
_{N,1}). \label{aux6}%
\end{equation}

As
\[
b_{\epsilon}(\beta_{\epsilon}^{-1}(\psi_{\epsilon}(s))=s\text{ \ for }s\geq0
\]
it follows from (\ref{aux6}) that
\[
b_{\epsilon}(t_{p})\leq b_{\epsilon}(\beta_{\epsilon}^{-1}(\psi_{\epsilon
}(S_{2}\left\Vert f\right\Vert _{N,1}))=S_{2}\left\Vert f\right\Vert _{N,1}.
\]

Thus, (\ref{aux8}) yields%
\begin{equation}
b_{\epsilon}(\left\Vert \nabla u\right\Vert _{\infty})\leq S_{3}\left\Vert
f\right\Vert _{N,1} \label{aux9a}%
\end{equation}
where%
\begin{equation}
S_{3}:=\sqrt{CK_{p}}S_{2}+\frac{2\left(  CK_{p}+1\right)  }{\min\left\{
p-1,1\right\}  }. \label{S3}%
\end{equation}

\begin{remark}
We note from (\ref{C'p}), (\ref{qsi}) and (\ref{sbar}) that
\begin{equation}
\overline{s}_{p}=\overline{C}\left\{
\begin{array}
[c]{lll}%
\left(  \frac{p-1}{6}\right)  ^{\frac{\theta N}{\theta-(N-1)}} & \mathrm{if} &
1<p<2\\
\left(  \frac{1}{2(2p-1)}\right)  ^{\frac{\theta N}{\theta-(N-1)}} &
\mathrm{if} & p\geq2
\end{array}
\right.  \label{stil}%
\end{equation}
where%
\begin{equation}
\overline{C}:=\left(  \frac{(c_{\Omega})^{1/\theta}}{N^{\prime}\left\Vert
k\right\Vert _{\theta,1}}\right)  ^{\frac{\theta N}{\theta-(N-1)}}.
\label{Cbar}%
\end{equation}

\end{remark}

\begin{remark}
\label{spp}Note from (\ref{stil}) that
\[
\lim_{p\rightarrow1^{+}}\overline{s}_{p}=\lim_{p\rightarrow+\infty}%
\overline{s}_{p}=0.
\]
Thus, we can see from (\ref{s0}) that if $p$ is sufficiently close to $1$ or
sufficiently greater than $2,$ then $s_{p}=\overline{s}_{p}.$
\end{remark}

\begin{remark}
\label{rem2}According to Remark \ref{rem1}, if $\Omega$ is convex the latter
term on right-hand side of (\ref{aux2}) can be discarded and so we can take
$s_{p}=\left\vert \Omega\right\vert /2.$
\end{remark}

Now, let us again denote by $u_{\epsilon}$ either the weak solution to
(\ref{nae}), under the compatibility condition (\ref{f0}), or the weak
solution to (\ref{dae}). Thus, (\ref{aux9a}) can be written as%
\begin{equation}
b_{\epsilon}(\left\Vert \nabla u_{\epsilon}\right\Vert _{\infty})\leq
S_{3}\left\Vert f\right\Vert _{N,1}. \label{aux9}%
\end{equation}

As in Step 2 of \cite[Section 4]{CM} the assumption (\ref{regb}) can be
removed by properly approximating $\Omega$ by a sequence of smooth domains
(and convex if $\Omega$ is convex). Once removed (\ref{regb}) the weak
solution $u$ to either (\ref{nf}) or (\ref{df}) is obtained as limit of
$u_{\epsilon}$ as $\epsilon\rightarrow0.$ This can be done as in Step 3 of
\cite[Section 4]{CM} by applying known regularity results (see \cite{Li91})
after taking into account the convergences (\ref{bconv})-(\ref{aconv}).

Hence, by letting $\epsilon\rightarrow0^{+}$ in (\ref{aux9}) we arrive at the
estimate%
\begin{equation}
\left\Vert \nabla u\right\Vert _{\infty}^{p-1}\leq S_{3}\left\Vert
f\right\Vert _{N,1} \label{Cdp1}%
\end{equation}
with $S_{3}$ defined by (\ref{S3}) and $f$ fulfilling (\ref{regf}).

Then, proceeding as in Step 4 of \cite[Section 4]{CM} the assumption
(\ref{regf}) can be removed by density arguments.

Finally, let us estimate $S_{3}$ in terms of $p.$ From now on $C$ will denote
a positive constant sufficiently large possibly depend on $N$ and $\Omega,$
but not on $p.$ We consider the dependence on $\left\Vert k\right\Vert
_{\theta,1}$ and $\theta$ as part of the dependence on $\Omega.$

Let
\[
\overline{C}_{N,\Omega}:=\max\left\{  (C_{N,\Omega})^{-1},(\overline{C}%
)^{-1}\right\}
\]
where $C_{N,\Omega}$ and $\overline{C}$ are defined by (\ref{CNO}) and
(\ref{Cbar}), respectively.

If $1<p<2,$ then (\ref{s0}) implies that either%
\[
\frac{1}{s_{p}}=\frac{1}{C_{N,\Omega}}\leq\frac{1}{C_{N,\Omega}}\left(
\frac{p-1}{6}\right)  ^{-\frac{\theta N}{\theta-(N-1)}}\leq\overline
{C}_{N,\Omega}\left(  \frac{p-1}{6}\right)  ^{-\frac{\theta N}{\theta-(N-1)}}%
\]
or, according to (\ref{stil}),
\[
\frac{1}{s_{p}}=\frac{1}{\overline{s}_{p}}=\frac{1}{\overline{C}}\left(
\frac{p-1}{6}\right)  ^{-\frac{\theta N}{\theta-(N-1)}}\leq\overline
{C}_{N,\Omega}\left(  \frac{p-1}{6}\right)  ^{-\frac{\theta N}{\theta-(N-1)}%
}.
\]
Thus, (\ref{S1}) and (\ref{S2}) yield%
\[
S_{2}\leq C2^{\frac{p}{p-1}}(p-1)^{-\frac{\theta N}{\theta-(N-1)}},
\]
where $6^{(\theta N)/(\theta-(N-1))}\overline{C}_{N,\Omega}$ and
$C^{\prime\prime}$ are absorbed by $C$ (recall that $C^{\prime\prime}$ also
depends at most on $N$ and $\Omega$).

Then, as $K_{p}=3$ and
\[
(p-1)^{-1}<2^{\frac{p}{p-1}}(p-1)^{-\frac{\theta N}{\theta-(N-1)}}%
\]
(\ref{S3}) yields
\begin{equation}
S_{3}\leq C2^{\frac{p}{p-1}}(p-1)^{-\frac{\theta N}{(\theta-(N-1))}}
\label{S3a}%
\end{equation}
which combined with (\ref{Cdp1}) produces (\ref{Cdp}).

Similarly, if $p\geq2$ then (\ref{s0}) and (\ref{stil}) imply that%
\[
\frac{1}{s_{p}}\leq\overline{C}_{N,\Omega}(2(2p-1))^{\frac{\theta N}%
{\theta-(N-1)}}<4^{\frac{\theta N}{\theta-(N-1)}}\overline{C}_{N,\Omega
}p^{\frac{\theta N}{\theta-(N-1)}}.
\]
Hence, it follows from (\ref{S1}) and (\ref{S2}) that
\[
S_{2}\leq Cp^{2+\frac{\theta N}{\theta-(N-1)}}.
\]

Consequently,%
\begin{align*}
S_{3}  &  =\sqrt{C(2p-1)}S_{2}+\frac{2\left(  C(2p-1)+1\right)  }{\min\left\{
p-1,1\right\}  }\\
&  =\sqrt{C(2p-1)}S_{2}+2\left(  C(2p-1)+1\right)  \leq Cp^{\frac{1}{2}}S_{2},
\end{align*}
that is,%
\begin{equation}
S_{3}\leq Cp^{\left(  2+\frac{\theta N}{\theta-(N-1)}\right)  +\frac{1}{2}%
}=Cp^{\frac{5}{2}+\frac{\theta N}{\theta-(N-1)}}. \label{S3b}%
\end{equation}
Therefore, (\ref{Cdp}) follows from (\ref{Cdp1}) and (\ref{S3b}).

According to Remark \ref{rem2} in the case where $\Omega$ is convex
(\ref{CCdp}) follows from the fact that (\ref{S3a}) and (\ref{S3b}) can be
respectively replaced with $S_{3}\leq C2^{\frac{p}{p-1}}$ and $S_{3}\leq
Cp^{\frac{5}{2}}.$

\subsection{Proof of Theorem \ref{N=2}\label{sec3.2}}

In this subsection we consider $N=2,$ $\theta>1$ and $q>2.$ Again, we
initially assume the regularity assumptions (\ref{regb}) and (\ref{regf}).

The inequality (\ref{aux2}) now writes as
\begin{equation}%
\begin{array}
[c]{lll}%
2\xi_{p}F_{\epsilon}(\left\Vert \nabla u\right\Vert _{\infty}) & \leq &
2\xi_{p}F_{\epsilon}(t_{0})+C_{\Omega}b_{\epsilon}(\left\Vert \nabla
u\right\Vert _{\infty})\left\Vert \nabla u\right\Vert _{\infty}%
{\displaystyle\int_{0}^{\left\vert \Omega\right\vert }}
r^{-1/2}\phi(r)\mathrm{d}r\\
&  & +\dfrac{C_{\Omega}}{\xi_{p}}\left\Vert \nabla u\right\Vert _{\infty}%
{\displaystyle\int_{0}^{\left\vert \Omega\right\vert }}
r^{-1}%
{\displaystyle\int_{0}^{r}}
f^{\ast}(\rho)^{2}\mathrm{d}\rho\mathrm{d}r\\
&  & +K_{p}F_{\epsilon}(\left\Vert \nabla u\right\Vert _{\infty})%
{\displaystyle\int_{0}^{\mu(t_{0})}}
k^{\ast\ast}(c_{\Omega}r^{\frac{1}{2}})r^{-\frac{1}{2}}\mathrm{d}r
\end{array}
\label{aux2a}%
\end{equation}
whenever $t_{0}\in\left[  \left\vert \nabla u\right\vert ^{\ast}%
(\Omega/2),\left\Vert \nabla u\right\Vert _{\infty}\right]  ,$ where $k$ is a
nonnegative function in $L^{\theta,1}(\partial\Omega).$

Let us derive the following estimates that respectively correspond to
(\ref{fa}) and (\ref{fb}):
\begin{equation}
\int_{0}^{\left\vert \Omega\right\vert }r^{-1/2}\phi(r)\mathrm{d}r\leq
C_{q,\Omega}\left\Vert f\right\Vert _{q} \label{fa2}%
\end{equation}
and%
\begin{equation}
\int_{0}^{\left\vert \Omega\right\vert }r^{-1}%
{\displaystyle\int_{0}^{r}}
f^{\ast}(\rho)^{2}\mathrm{d}\rho\mathrm{d}r\leq\widetilde{C}_{q,\Omega
}\left\Vert f\right\Vert _{q}^{2} \label{fb2}%
\end{equation}
where $C_{q,\Omega}$ and $\widetilde{C}_{q,\Omega}$ are constants that depend
only on $q$ and $\left\vert \Omega\right\vert .$

First, we note from H\"{o}lder inequality that
\begin{equation}
\int_{0}^{r}f^{\ast}(\tau)^{2}d\tau\leq\left\Vert f\right\Vert _{q}%
^{2}r^{1-(2/q)},\text{ }r\geq0. \label{faa}%
\end{equation}

Proceeding as in \cite[Lemma 3.5]{CM}, we obtain the following inequalities
\begin{align*}
\int_{0}^{\left\vert \Omega\right\vert }r^{-1/2}\phi(r)\mathrm{d}r  &
\leq\int_{0}^{\left\vert \Omega\right\vert }r^{-1/2}\phi^{\ast}(r)\mathrm{d}%
r\\
&  =\int_{0}^{\left\vert \Omega\right\vert }r^{-1/2}\left(  \phi^{\ast}%
(r)^{2}\right)  ^{1/2}\mathrm{d}r\\
&  \leq\int_{0}^{\left\vert \Omega\right\vert }r^{-1/2}\left(  \frac{1}{r}%
\int_{0}^{r}\phi^{\ast}(\tau)^{2}\mathrm{d}\tau\right)  ^{1/2}\mathrm{d}r\\
&  =\int_{0}^{\left\vert \Omega\right\vert }r^{-1}\left(  \int_{0}^{r}%
\phi^{\ast}(\tau)^{2}\mathrm{d}\tau\right)  ^{1/2}\mathrm{d}r.
\end{align*}
Here, we have used the Hardy-Littlewood inequality and the fact that
$\phi^{\ast}$ is decreasing.

Hence, the estimate (\ref{fa2}) follows after using (\ref{phif}) and
(\ref{faa}):
\begin{align*}
\int_{0}^{\left\vert \Omega\right\vert }r^{-1/2}\phi(r)\mathrm{d}r  &
\leq\int_{0}^{\left\vert \Omega\right\vert }r^{-1}\left(  \int_{0}^{r}f^{\ast
}(\tau)^{2}\mathrm{d}\tau\right)  ^{1/2}\mathrm{d}r\\
&  \leq\left\Vert f\right\Vert _{q}\int_{0}^{\left\vert \Omega\right\vert
}r^{-(1/2)-(1/q)}\mathrm{d}r=C_{q,\Omega}\left\Vert f\right\Vert _{q}.
\end{align*}
where $C_{q,\Omega}:=\frac{\left\vert \Omega\right\vert ^{(1/2)-(1/q)}%
}{(1/2)-(1/q)}.$

Estimate (\ref{fb2}) also stems from (\ref{faa}), since%
\begin{align*}
\int_{0}^{\left\vert \Omega\right\vert }r^{-1}%
{\displaystyle\int_{0}^{r}}
f^{\ast}(\rho)^{2}\mathrm{d}\rho\mathrm{d}r  &  \leq\left\Vert f\right\Vert
_{q}^{2}\int_{0}^{\left\vert \Omega\right\vert }r^{-1}r^{1-(2/q)}%
\mathrm{d}\rho\mathrm{d}r\\
&  =\left\Vert f\right\Vert _{q}^{2}\frac{q}{q-2}\left\vert \Omega\right\vert
^{\frac{q-2}{q}}=\widetilde{C}_{q,\Omega}\left\Vert f\right\Vert _{q}^{2}%
\end{align*}
where $\widetilde{C}_{q,\Omega}:=\frac{q}{q-2}\left\vert \Omega\right\vert
^{\frac{q-2}{q}}.$

Now, using (\ref{fa}) and (\ref{fb}) in (\ref{aux2a}) we obtain%
\begin{equation}%
\begin{array}
[c]{lll}%
2\xi_{p}F_{\epsilon}(\left\Vert \nabla u\right\Vert _{\infty}) & \leq &
2\xi_{p}F_{\epsilon}(t_{0})+C_{\Omega}C_{q,\Omega}b_{\epsilon}(\left\Vert
\nabla u\right\Vert _{\infty})\left\Vert \nabla u\right\Vert _{\infty
}\left\Vert f\right\Vert _{q}\\
&  & +\dfrac{C_{\Omega}\widetilde{C}_{q,\Omega}}{\xi_{p}}\left\Vert \nabla
u\right\Vert _{\infty}\left\Vert f\right\Vert _{q}^{2}\\
&  & +K_{p}F_{\epsilon}(\left\Vert \nabla u\right\Vert _{\infty})%
{\displaystyle\int_{0}^{\mu(t_{0})}}
k^{\ast\ast}(c_{\Omega}r^{\frac{1}{2}})r^{-\frac{1}{2}}\mathrm{d}r
\end{array}
\label{aux3a}%
\end{equation}
whenever $t_{0}\in\left[  \left\vert \nabla u\right\vert ^{\ast}%
(\Omega/2),\left\Vert \nabla u\right\Vert _{\infty}\right]  .$

Let us define $G:[0,\infty)\rightarrow\lbrack0,\infty)$ as%
\[
G(s):=%
{\displaystyle\int_{0}^{s}}
k^{\ast\ast}(c_{\Omega}r^{\frac{1}{2}})r^{-\frac{1}{2}}\mathrm{d}r=\frac
{2}{c_{\Omega}}%
{\displaystyle\int_{0}^{c_{\Omega}s^{1/2}}}
k^{\ast\ast}(\tau)\mathrm{d}\tau,\text{ \ }s>0.
\]

If $c_{\Omega}s^{1/2}\leq\left\vert \partial\Omega\right\vert ,$ then%
\[
G(s)\leq\frac{2}{c_{\Omega}}(c_{\Omega}s^{1/2})^{\frac{1}{\theta^{\prime}}}%
{\displaystyle\int_{0}^{c_{\Omega}s^{1/2}}}
k^{\ast\ast}(\tau)\tau^{-\frac{1}{\theta^{\prime}}}\mathrm{d}\tau
\leq2(c_{\Omega})^{-\frac{1}{\theta}}s^{\frac{1}{2\theta^{\prime}}}\left\Vert
k\right\Vert _{\theta,1}.
\]
Hence, we define $\overline{s}_{p}$ by the equality%
\[
2(c_{\Omega})^{-\frac{1}{\theta}}(\overline{s}_{p})^{\frac{1}{2\theta^{\prime
}}}\left\Vert k\right\Vert _{\theta,1}=\frac{\xi_{p}}{K_{p}},
\]
so that
\begin{equation}
\overline{s}_{p}=\overline{C}\left\{
\begin{array}
[c]{lll}%
\left(  \frac{p-1}{6}\right)  ^{\frac{2\theta}{\theta-1}} & \mathrm{if} &
1<p<2\\
\left(  \frac{1}{2(2p-1)}\right)  ^{\frac{2\theta}{\theta-1}} & \mathrm{if} &
p\geq2
\end{array}
\right.  \label{spa}%
\end{equation}
where $\overline{C}:=(2\left\Vert k\right\Vert _{\theta,1}(c_{\Omega}%
)^{-\frac{1}{\theta}})^{-\frac{2\theta}{\theta-1}}.$

Let $t_{p}:=\left\vert \nabla u\right\vert ^{\ast}(s_{p})$ where
\begin{equation}
s_{p}:=\min\left\{  C_{\Omega}\text{ },\text{ }\overline{s}_{p}\right\}  .
\label{sp2}%
\end{equation}
and%
\[
C_{\Omega}:=\min\left\{  \left(  \frac{\left\vert \partial\Omega\right\vert
}{c_{\Omega}}\right)  ^{2}\;,\;\frac{\left\vert \Omega\right\vert }%
{2}\;,\;C^{\prime\prime}\right\}
\]

It follows that $\mu(t_{p})\leq s_{p}$ and
\begin{align*}
K_{p}%
{\displaystyle\int_{0}^{\mu(t_{p})}}
k^{\ast\ast}(c_{\Omega}r^{\frac{1}{2}})r^{-\frac{1}{2}}\mathrm{d}r  &
=K_{p}G(\mu(t_{p}))\\
&  \leq K_{p}G(s_{p})\leq K_{p}2(c_{\Omega})^{\frac{1}{\theta}}(\overline
{s}_{p})^{1/(2\theta^{\prime})}\left\Vert k\right\Vert _{\theta,1}=\xi_{p}.
\end{align*}
Taking $t_{0}=t_{p}$ in (\ref{aux3a}), using (\ref{Fb}), the latter estimate
and Lemma \ref{square} we arrive at%
\[
b_{\epsilon}(\left\Vert \nabla u\right\Vert _{\infty})\leq\sqrt{CK_{p}%
}b_{\epsilon}(t_{p})+\frac{2\left(  CK_{p}+1\right)  }{\min\left\{
p-1,1\right\}  }\left\Vert f\right\Vert _{q}%
\]
for a positive constant $C$ sufficiently large, that depends at most on $q$
and $\Omega$ (but not on $p$). Then, estimating $b_{\epsilon}(t_{p})$ as in
(\ref{aux6}) we derive the estimate $b_{\epsilon}(\left\Vert \nabla
u_{\epsilon}\right\Vert _{\infty})\leq S_{3}\left\Vert f\right\Vert _{q}$
where $S_{3}$ is defined by combining (\ref{S1}), (\ref{S2}), (\ref{S3}),
(\ref{spa}) and (\ref{sp2}).

Repeating the script of Subsection \ref{sec3.1} we can remove the additional
regularity\ assumptions (\ref{regb}) and (\ref{regf}) to obtain the estimate%
\begin{equation}
\left\Vert \nabla u\right\Vert _{\infty}^{p-1}\leq S_{3}\left\Vert
f\right\Vert _{q} \label{S32}%
\end{equation}
where $u$ is either a weak solution of the Neumann problem (\ref{nf}), under
the compatibility condition $\int_{\Omega}f(x)\mathrm{d}x=0,$ or a solution of
the Dirichlet problem (\ref{df}). In both cases, $f\in L^{q}(\Omega)$ with
$q>2.$ Then, as in the latter part of Subsection \ref{sec3.1} we obtain from
(\ref{S32}) the estimate (\ref{C2a}) in the case in which $\partial\Omega\in
W^{2}L^{\theta,1}$ and the estimate (\ref{C2b}) in the case which $\Omega$ is
convex (since $s_{p}=\left\vert \Omega\right\vert /2$).

\section{An explict estimate for more general operators\label{aeps1}}

Suppose that $a_{\epsilon}$ is an abstract regularization of $a$ (not
necessarily related to the $p$-Laplacian) satisfying the structural
conditions
\[
-1<i_{a_{\epsilon}}:=\inf_{t>0}\frac{ta_{\epsilon}^{\prime}(t)}{a_{\epsilon
}(t)}\leq s_{a_{\epsilon}}:=\sup_{t>0}\frac{ta_{\epsilon}^{\prime}%
(t)}{a_{\epsilon}(t)}<\infty.
\]

Inspecting the proofs given in Section \ref{sec2} and taking into account
Remark \ref{aeps} one can deduce that%
\[
b_{\epsilon}(\left\Vert \nabla u_{\epsilon}\right\Vert _{\infty})\leq
S_{3,_{\epsilon}}\left\Vert f\right\Vert _{N,1}%
\]
where%
\[
S_{3,_{\epsilon}}:=\sqrt{CK_{\epsilon}}S_{2,\epsilon}+\dfrac{2(CK_{\epsilon
}+1)}{1+\min\left\{  i_{a_{\epsilon}},0\right\}  },
\]
with
\[
S_{2,_{\epsilon}}\leq CS_{1,\epsilon}\left(  \dfrac{3+2s_{a_{\epsilon}}%
}{1+\min\left\{  i_{a_{\epsilon}},0\right\}  }\right)  ^{\frac{\theta
N}{\theta-(N-1)}}%
\]
and%
\[
S_{1,\epsilon}:=(2+s_{a_{\epsilon}})C_{\epsilon}=(2+s_{a_{\epsilon}}%
)^{\frac{2+i_{a_{\epsilon}}}{1+i_{a_{\epsilon}}}}>1.
\]

These facts imply that%
\[
b_{\epsilon}(\left\Vert \nabla u_{\epsilon}\right\Vert _{\infty})\leq
C\Lambda_{\epsilon}\left\Vert f\right\Vert _{N,1}%
\]
where%
\[
\Lambda_{\epsilon}:=\sqrt{(3+2s_{a_{\epsilon}})}(2+s_{a_{\epsilon}}%
)^{\frac{2+i_{a_{\epsilon}}}{1+i_{a_{\epsilon}}}}\left(  \dfrac
{3+2s_{a_{\epsilon}}}{1+\min\left\{  i_{a_{\epsilon}},0\right\}  }\right)
^{\frac{\theta N}{\theta-(N-1)}}+\dfrac{4+2s_{a_{\epsilon}}}{1+\min\left\{
i_{a_{\epsilon}},0\right\}  }%
\]
and $C$ is a constant depending at most on $N$ and $\Omega.$ If $\Omega$ is
convex, then the factor involving the power $\frac{\theta N}{\theta-(N-1)}$
must be replaced with $1.$ Moreover, if
\[
-1<\min\left\{  i_{a},0\right\}  \leq\min\left\{  i_{a_{\epsilon}},0\right\}
\leq i_{a_{\epsilon}}\leq s_{a_{\epsilon}}\leq\max\left\{  s_{a},0\right\}  ,
\]
then%
\[
\Lambda_{\epsilon}\leq\Lambda(i_{a},s_{a})
\]
where $\Lambda(i_{a},s_{a})$ is the explicit function of $i_{a}$ and $s_{a}$
obtained from $\Lambda_{\epsilon}$ by replacing $s_{a_{\epsilon}}$ with
$\max\left\{  s_{a},0\right\}  $ and both $i_{a_{\epsilon}}$ and $\min\left\{
i_{a_{\epsilon}},0\right\}  $ with $\min\left\{  i_{a},0\right\}  .$ For
example, if $\Omega$ is convex then%
\[
\Lambda(i_{a},s_{a})=\sqrt{(3+2\max\left\{  s_{a},0\right\}  )}(2+\max\left\{
s_{a},0\right\}  )^{1+\frac{1}{1+\min\left\{  i_{a},0\right\}  }}%
+\dfrac{4+2s_{a_{\epsilon}}}{1+\min\left\{  i_{a},0\right\}  }.
\]

Therefore, the results of Cianchi and Maz'ya lead to global estimate%
\[
b(\left\Vert \nabla u\right\Vert _{\infty})\leq C\Lambda(i_{a},s_{a}%
)\left\Vert f\right\Vert _{N,1}%
\]
for the solutions to (\ref{nf}) and (\ref{df}) under the hypotheses of
Theorems \ref{main} and \ref{main2}. A similar estimate, with $\left\Vert
f\right\Vert _{q}$ in the place of $\left\Vert f\right\Vert _{N,1},$ also
holds under the hypotheses of Theorem \ref{N=2}.

\section*{Acknowledgments}

The author thanks the support of Fapemig/Brazil (PPM-00137-18, RED-00133-21),
FAPDF/Brazil (04/2021) and CNPq/Brazil (305578/2020-0).

\end{document}